\theoremstyle{plain}
\newtheorem{theorem}{Theorem}[section]
\newtheorem*{theoremA}{Theorem}
\newtheorem{lemma}[theorem]{Lemma}
\newtheorem{proposition}[theorem]{Proposition}
\theoremstyle{definition}
\newtheorem{ex-constr}[theorem]{Construction}
\renewcommand{\dim}{\mathrm{dim}\,}
\renewcommand{\P}{{\mathbb P}}
\newcommand{\CH}{\mathrm{CH}}
\title[The variety of triangles]{
On the variety of triangles for a hyper-K\"ahler fourfold constructed by Debarre and Voisin}
\author{Ivan Bazhov}
\address{Institut de Mathématiques de Jussieu, 4 Place Jussieu, 75005, France}
\email{ibazhov@gmail.com}
\keywords{Hyper-K\"ahler varieties; Lagrangian subvarieties; Fano variety of lines; Algebraic cycles; Chow rings}
\date{}
\begin{document}

\begin{abstract}
We study the similarities between the Fano varieties of lines on a cubic fourfold, a hyper-K\"ahler fourfold studied by Beauville and Donagi, and the hyper-K\"ahler fourfold constructed by Debarre and Voisin in \cite{DV}.
We exhibit an analog of the notion of "triangle" for these varieties and prove that the 6-dimensional variety of "triangles" is a Lagrangian subvariety in the cube of the constructed hyper-K\"ahler fourfold.
\end{abstract}

\maketitle{}

\section*{Introduction}
By definition a compact K\"ahler manifold $X$ is a hyper-K\"ahler manifold 
if $X$ is simply connected and $H^0(X,\Omega_X^2)$ is of dimension $1$, generated by a holomorphic 2-form $\sigma$, which is non-degenerated at any point of $X$. The 2-form $\sigma$ is called the symplectic holomorphic form of $X$. It is defined up to a multiplicative constant.

Beauville in \cite{B84} provides two series of families of examples, for each even complex dimension: (a) the $n-$punctual Hilbert scheme $S^{[n]}$ of a K3 surface $S$ and (b) the fiber at the origin of the Albanese map of the $(n+1)-$st punctual Hilbert scheme of an abelian surface. All of the irreducible hyper-K\"ahler manifolds constructed later on are deformation-equivalent to one of Beauville's examples, with two exceptions: O'Grady examples in dimension 6 and in dimension 10 (see \cite{OG1,OG2}).

We note that the varieties in Beauville's examples have Picard number two, while a general algebraic deformation of a hyper-K\"ahler manifold has Picard number one.  There are not so many available explicit constructions of these general deformations with Picard number one. Only four such families, each of which is 20-dimensional and parametrizes general polarized deformations of the second punctual Hilbert scheme of a K3 surface, are known:

\begin{enumerate}
\item (Beauville and Donagi, \cite{BD}) The Fano variety of lines of a cubic fourfold. It was proven in \cite{BD} that the variety $F(X)$ of lines on a smooth cubic hypersurface $F\subset \P^5$ is an algebraic hyper-K\"ahler fourfold. It gives a 20-dimensional moduli space of fourfolds.
\item (Iliev and Ranestad, \cite{IR1,IR2}) The variety $V(X)$ of sum of powers of a general cubic $X\subset \P^5$. It was proven in \cite{IR1,IR2} that it is another algebraic hyper-K\"ahler fourfold, with 20 moduli.
\item (O'Grady, \cite{OG4}) O'Grady constructed a 20-parameter family of hyper-K\"ahler algebraic fourfolds. They are quasi-\'etale double covers of certain singular sextic hypersurfaces constructed by Eisenbud, Popescu, and Walter. 
\item (Debarre and Voisin, \cite{DV}) Using Grassmann geometry another 20-dimensional family of hyper-K\"ahler varieties which are deformations of $S^{[2]}$ for $S$ of genus 12 is constructed.  
\end{enumerate}


We study a hyper-K\"ahler four-dimensional manifold $F$ constructed by Debarre and Voisin from a hyperplane section $X$ in $Gr(3,V_{10})$. The construction is very similar to the construction of Fano variety of lines for a cubic fourfold (see the next section). Like to the case of Fano variety of lines, where a "triangle" is three lines in the cubic fourfold 
having non-trivial pairwise intersections, we introduce a notion of triangle on $X$ and define a corresponding subvariety $I_3\subset F\times F\times F$.  Our main result is the following theorem. 

\begin{theoremA}
(Theorem \ref{th_42} (\ref{th_42_2}))
The $6-$dimensional subvariety $I_3$ is Lagrangian for 2-form $\sum_{i=1}^{3}\pi^*_i\sigma_F$, where $\pi_i:F^3\to F$ are the natural projections and $\sigma_F$ is a holomorphic 2-form on $F$.
\end{theoremA}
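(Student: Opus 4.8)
The plan is to reduce the statement to an isotropy computation and then carry out that computation on tangent spaces using the Grassmannian description of $F$. Since $\sigma_F$ is a holomorphic symplectic form on the fourfold $F$, the form $\omega:=\sum_{i=1}^3\pi_i^*\sigma_F$ is a holomorphic symplectic form on $F^3$: one has $\omega\big((u_1,u_2,u_3),(v_1,v_2,v_3)\big)=\sum_i\sigma_F(u_i,v_i)$, and nondegeneracy on $F^3$ is immediate from nondegeneracy of $\sigma_F$ on each factor. As $\dim I_3=6=\tfrac12\dim F^3$ (the dimension being part of Theorem \ref{th_42}), it suffices to prove that $I_3$ is isotropic, i.e. that $\omega$ restricts to $0$ on $T_tI_3$ at a general smooth point $t=(W_1,W_2,W_3)\in I_3$.

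To set this up I would use the identification $T_{[W]}\mathrm{Gr}(6,V_{10})=\Hom(W,V_{10}/W)$ and the fact that $T_{[W]}F=\ker(ds_\sigma)$, where $ds_\sigma\colon\Hom(W,V_{10}/W)\to\wedge^3W^*$ is the differential of the defining section $[W]\mapsto\sigma|_{\wedge^3W}$, together with the explicit expression of $\sigma_F$ as the pairing on $\ker(ds_\sigma)$ induced by the $3$-form $\sigma$. The tangent space $T_tI_3\subset\bigoplus_{i=1}^3T_{W_i}F$ is then cut out inside $\bigoplus_i\ker(ds_\sigma)$ by the first-order conditions obtained by differentiating the incidence relations that make $(W_1,W_2,W_3)$ a triangle. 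Concretely, these relations force each infinitesimal deformation $u_i\in\Hom(W_i,V_{10}/W_i)$ to be compatible, along the shared vertex $W_i\cap W_j$, with $u_j$: the common vertex of two sides must deform in the same way whether it is viewed from $W_i$ or from $W_j$.

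With this in hand the computation of $\omega|_{T_tI_3}$ should reorganize into contributions indexed by the three vertices $W_i\cap W_j$. The key point I would establish is that, exactly as in the classical picture for lines on a cubic fourfold, each side $W_i$ is spanned by its two vertices $W_i\cap W_j$ and $W_i\cap W_k$, so that the formula for $\sigma_F$ splits the summand $\sigma_F(u_i,v_i)$ into a part attached to the edge $\{i,j\}$ and a part attached to the edge $\{i,k\}$; and that the linearized incidence condition along $W_i\cap W_j$ makes the edge-$\{i,j\}$ part coming from $\sigma_F(u_i,v_i)$ equal to minus the edge-$\{i,j\}$ part coming from $\sigma_F(u_j,v_j)$. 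Summing $\sum_i\sigma_F(u_i,v_i)$ then telescopes: each of the three edges occurs twice with opposite signs, and the total vanishes. This gives $\omega|_{T_tI_3}=0$ and, with the dimension count, the Lagrangian property.

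The main obstacle is precisely the content of the previous paragraph: producing a workable closed form for $\sigma_F$ in terms of $\sigma$ and the tangent data, and then linearizing the triangle incidence so that the edge contributions can be isolated and shown to be antisymmetric across each shared vertex. Getting the signs and the matching of the two views of a vertex exactly right — so that the three edge-terms cancel in pairs — is where the real work lies; the reduction to isotropy and the dimension bookkeeping are routine by comparison. As a cross-check I would also use the alternative organization in which one introduces the $6$-dimensional variety parametrizing the triangle configurations themselves, factors each projection $\pi_i$ through it, and verifies the vanishing of $\sum_i\pi_i^*\sigma_F$ directly on that parameter space.
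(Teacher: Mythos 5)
Your reduction to isotropy plus the dimension count is the routine part, and it is fine; the problem is that the entire mathematical content of your argument --- a workable closed formula for $\sigma_{F(X)}$ and the pairwise cancellation of ``edge'' contributions --- is asserted rather than carried out, and you yourself flag it as ``where the real work lies.'' That work \emph{is} the proof, and there are concrete reasons to doubt it can be done as described. First, neither \cite{DV} nor this paper provides an explicit expression of $\sigma_{F(X)}$ as a pairing on $\ker(ds_\sigma)$ induced by the $3$-form: the symplectic form is characterized only Hodge-theoretically, via the statement $U^*H^{11,9}(X)=H^{2,0}(F(X))$, so you would first have to derive and justify such a formula. Second, the linearized incidence condition along a vertex $K_3=W_1\cap W_2$ is much weaker than your phrase ``the common vertex must deform in the same way'' suggests: the restrictions $u_1|_{K_3}$ and $u_2|_{K_3}$ take values in the \emph{different} spaces $V_{10}/W_1$ and $V_{10}/W_2$, and the condition for the vertex to survive to first order is only that they agree after projection to the one-dimensional quotient $V_{10}/(W_1+W_2)$ --- three scalar equations per edge. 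Moreover these nine linear conditions on the twelve-dimensional space $\bigoplus_i T_{[W_i]}F(X)$ are necessarily degenerate (of rank at most six, since $\dim I_3=6$), so the local picture is not the naive transversal one. That $\sigma_{F(X)}(u_1,v_1)$ splits into edge terms which cancel against those of $\sigma_{F(X)}(u_2,v_2)$ using only this rank-one compatibility is a genuine claim requiring proof, and nothing in the paper or its references supplies it.

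The paper's actual proof takes a completely different route that never needs a formula for $\sigma_{F(X)}$: it deduces the Lagrangian property from part (\ref{th_42_1}) of Theorem \ref{th_42}, namely that the cycle $Z_{W_1}+Z_{W_2}+Z_{W_3}\in\CH_9(X)$ is the restriction of a fixed cycle on $Gr(3,10)$, hence independent of the point of $I_3$ (Proposition \ref{triangle}, which rests on the quadric geometry of Lemma \ref{l_48} and the boundary analysis of Lemmas \ref{l_47}--\ref{l_411}). Given this, one takes a desingularization $\tilde I_3$ and the correspondence $T$ sending a point to $[W_1]+[W_2]+[W_3]$; the composite $U_*\circ T_*:\CH_0(\tilde I_3)\to\CH_9(X)$ has image $\Z$, so the generalization of Mumford's theorem \cite{M} forces $(U\circ T)^*:H^{11,9}(X)\to H^{2,0}(\tilde I_3)$ to vanish, and since $\sigma_{F(X)}=U^*\eta_X$ for some $\eta_X\in H^{11,9}(X)$ by \cite{DV}, the restriction of $\sum_i pr_i^*\sigma_{F(X)}$ to $I_3$ is $(U\circ T)^*\eta_X=0$. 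In other words, all the geometric difficulty is concentrated in the Chow-theoretic statement (\ref{th_42_1}), with which your proposal does not engage at all. To salvage your route you would need two deliverables: an explicit algebraic formula for $\sigma_{F(X)}$ at $[V_6]$ in terms of $\alpha_X$, and a verified computation of its restriction to the subspace cut out by the linearized incidence conditions; as it stands, your text is a plan for a proof, not a proof.
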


Our proof uses the fact that the cycle $[W_1]+[W_2]+[W_3]$ on $X$ corresponding to a point of $I_3$ is constant in $\CH_9(X)$ (i.e., does not depend on a choice of point in $I_3$). The similar result is obviously true for a cubic fourfold: any triangle is just a restriction of some plane to the cubic hypersurface.

Lagrangian subvarieties are related to constant cycle subvarieties. The notion of constant cycles subvarieties was introduced in \cite{H14} and used in \cite{Lin15, Voisin15} to study $\CH_0(X)$ for hyper-K\"ahler manifolds. A constant cycle subvariety $Y$ of $X$ is a subvariety of $X$ such that any two points of $Y$ represent the same class in $\CH_0(X)$. It was shown that any constant cycle subvariety of a hyper-K\"ahler variety is Lagrangian. However, we do not expect to prove that $I_3$ is a constant cycle subvariety, because there is no similar result for a cubic fourfold (see \cite[Theorem 20.5]{SV} for details).

We hope that the presented result will allow to attack the Beauville-Voisin conjecture for the hyper-K\"ahler manifolds constructed by Debarre and Voisin, which is already proved for Fano varieties of lines of cubic fourfold \cite{Voisin08}. 

{\bf Acknowledgement.} I am grateful to Claire Voisin for her kind help and guidance during the work. I am also grateful to
Samuel Boissi\`ere, Kieran O'Grady, and Laurent Manivel for reading the proof as a part of my thesis and for useful comments and remarks.

\section{Construction and the statements of the main results}

\subsection{Construction}

Let us recall the construction of \cite{DV}. Let $G(3,V_{10})$ be the Grassmann variety of 3-dimensional vector subspaces in a 10-dimensional vector space $V_{10}$ and let $X$ be a hyperplane section in $Gr(3,V_{10})$. The variety $X$ is defined by a 3-form
$$
\alpha_X=\sum \alpha_{ijk} e_i^*\wedge e_j^*\wedge e_k^*\in \Lambda^3 V_{10}^{*}
$$
where $(e^*_i)$ is a basis of the dual vector space $V^*_{10}$. 

The variety $F(X)$ is then defined as the subvariety of $Gr(6,V_{10})$ of all 6-dimensional spaces $V_6\subset V_{10}$ such that the form $i_{V_6}^*\alpha_X\in\Lambda^3 V_{6}^{*}$ is zero, where $i_{V_6}^*:\Lambda^3V_{10}^*\to \Lambda^3V_{6}^*$ is that natural map. Equivalently, for any 3-dimensional $V_3\subset V_6$ the restriction $i_{V_3}^*\alpha_X$ is zero and hence $Gr(3,V_6)\subset X$. We thus have a natural universal diagram:
\begin{equation}
\label{diag_new_fano}
\xymatrix{
U \ar[d]^{p} \ar[r]^{q\phantom{MMM}} &X\subset Gr(3,V_{10})\\
F(X)\subset Gr(6,V_{10})&},
\end{equation}
where $U$ is the universal variety consisting of pairs $(V_3,V_6)$ such that $V_3\subset V_6$ and $i_{V_6}^*\alpha_X$ is zero. For $[V_6]\in F(X)$ we will denote $Z_{V_6}:= Gr(3,V_6)\subset X$, a nine-dimensional subvariety of $X$ whose class is $U_*[V_6]$.  

\begin{theorem}
[\cite{DV}]
For $\alpha_X$ general, 
the variety $F(X)$ is an irreducible hyper-K\"ahler manifold of dimension four. More precisely, endowed with the Pl\"ucker line bundle, it is deformation-equivalent to the second punctual Hilbert scheme $S^{[2]}$ of a K3 surface $S$ of genus 12, endowed with the line bundle whose pull-back to $\widetilde{S\times S}$ is $(\mathcal O_S (1)\boxtimes\mathcal O_S (1))^{10}(-33\widetilde E)$. 
\end{theorem}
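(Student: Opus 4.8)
The plan is to realize $F(X)$ as the zero locus of a homogeneous vector bundle on a Grassmannian, read off its basic invariants from that description, and then transport everything to the Hilbert scheme by a degeneration argument.

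First I would set up the incidence. Let $\mathcal U\subset\O_{Gr}\otimes V_{10}$ denote the tautological rank-$6$ subbundle on $Gr(6,V_{10})$, so that $\det\mathcal U^*=\O(1)$ is the Pl\"ucker bundle. The form $\alpha_X\in\Lambda^3 V_{10}^*$ determines a global section $s_\alpha$ of $\Lambda^3\mathcal U^*$, whose value in the fibre over $[V_6]$ is exactly $i_{V_6}^*\alpha_X$; hence $F(X)=Z(s_\alpha)$. Since $\Lambda^3\mathcal U^*$ has rank $\binom{6}{3}=20$ and $\dim Gr(6,V_{10})=24$, the expected dimension is $4$. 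The bundle $\Lambda^3\mathcal U^*$ is globally generated, because restriction of forms $\Lambda^3 V_{10}^*\to\Lambda^3 V_6^*$ is surjective at every point; so for $\alpha_X$ general a Kleiman/Bertini argument gives that $s_\alpha$ vanishes transversally and $F(X)$ is smooth of dimension $4$. Adjunction then shows the canonical bundle is trivial: $\det(\Lambda^3\mathcal U^*)=(\det\mathcal U^*)^{\otimes\binom{5}{2}}=\O(10)$, while $K_{Gr(6,V_{10})}=\O(-10)$, so $K_{F(X)}=\bigl(K_{Gr}\otimes\det(\Lambda^3\mathcal U^*)\bigr)|_{F(X)}=\O_{F(X)}$.

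The heart of the argument is the deformation-equivalence statement, and this is where I expect the main obstacle to lie. I would work over the parameter space $B\subset\P(\Lambda^3 V_{10}^*)$ of forms for which the section is transverse; this is Zariski-open and dense, hence connected, and by Ehresmann's theorem all the resulting smooth fourfolds $F(X)$ are diffeomorphic, in particular deformation-equivalent, to one another. It therefore suffices to exhibit one special form $\alpha_0$ reachable from $B$ through a family of smooth fibres for which $F(X_{\alpha_0})$ can be identified with $S^{[2]}$ for $S$ a K3 surface of genus $12$. The difficult part is to produce this limit explicitly and carry out the identification: one uses Mukai's model of genus-$12$ K3 surfaces inside Grassmannian/wedge-power geometry to choose $\alpha_0$ adapted to a fixed such $S$, then sets up a natural bijection between the $6$-spaces $V_6$ annihilating $\alpha_0$ and the length-$2$ subschemes of $S$, and checks that it is an isomorphism of schemes (and that the family stays smooth, so Ehresmann applies). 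Granting this, deformation invariance of simple connectedness, of $h^{2,0}=1$, and of the holomorphic symplectic structure --- all of which $S^{[2]}$ enjoys --- shows that $F(X)$ is an irreducible hyper-K\"ahler fourfold deformation-equivalent to $S^{[2]}$, and produces the form $\sigma_F$ a posteriori. (Alternatively one can build $\sigma_F$ directly from $\alpha_X$ as the natural alternating form on $T_{[V_6]}F(X)=\ker\bigl(ds_\alpha:\Hom(V_6,V_{10}/V_6)\to\Lambda^3 V_6^*\bigr)$ and verify non-degeneracy pointwise, reproving $h^{2,0}\ge 1$ independently.)

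Finally I would pin down the polarization. Transporting the Pl\"ucker class $h=c_1(\O(1))|_{F(X)}$ across the deformation to $S^{[2]}$, write it as $h=a\,\mu(\O_S(1))-b\,\delta$, where $\mu(\O_S(1))$ is the class induced by $\O_S(1)$ and $2\delta$ is the exceptional divisor of $S^{[2]}\to S^{(2)}$. Pulling back along the double cover $\rho:\widetilde{S\times S}\to S^{[2]}$ gives $\rho^*h=(\O_S(1)\boxtimes\O_S(1))^{\otimes a}(-b\,\widetilde E)$, so the assertion amounts to $(a,b)=(10,33)$. I would fix these integers by computing the Beauville--Bogomolov square and the degree of $h$ on $F(X)$ (via the Fujiki relation and the degree of $F(X)$ in its Pl\"ucker embedding) together with the divisibility of $h$ in $\Pic$; using $q(\O_S(1))=2g-2=22$ and $q(a\,\mu-b\,\delta)=a^2\cdot 22-2b^2$, the value $q(h)=22$ forces $10^2\cdot 22-2\cdot 33^2=22$, the required consistency check singling out the stated coefficients.
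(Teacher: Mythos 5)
First, a point of orientation: the paper you are commenting on does not prove this statement at all --- it is quoted from Debarre--Voisin \cite{DV} and used as a black box --- so the relevant comparison is with the proof in \cite{DV}. Your outline does reproduce the skeleton of that proof: $F(X)$ is the zero locus of the section $s_\alpha$ of $\Lambda^3\mathcal{U}^*$ (rank $20$ on the $24$-dimensional $Gr(6,V_{10})$), global generation plus a Bertini/Kleiman argument gives smoothness of dimension $4$ for general $\alpha_X$, adjunction with $\det(\Lambda^3\mathcal{U}^*)=\O(10)$ and $K_{Gr(6,V_{10})}=\O(-10)$ gives $K_{F(X)}=\O_{F(X)}$, and deformation equivalence is obtained by specializing to a form $\alpha_0$ built from Mukai's model of a genus-$12$ K3 surface, identifying the special fibre with $S^{[2]}$, and checking that the zero locus is still smooth there so that the family is smooth and proper over a connected base. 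All of these reductions are correct.

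However, as a proof the proposal has two genuine gaps. (i) The identification of the special fibre is precisely the hard core of \cite{DV} --- constructing $\alpha_0$ on a decomposition $V_{10}=V_7\oplus W_3$ from the Mukai data defining $S$ inside $Gr(3,V_7)$, producing the morphism $S^{[2]}\to F(X_{\alpha_0})$, proving it is an isomorphism of schemes, and proving transversality of $s_{\alpha_0}$ along its zero locus --- and you defer all of it (``granting this''). Without that step there is no theorem; it occupies most of the Debarre--Voisin paper. (ii) The polarization step does not work as stated. The consistency check $q(h)=22=22a^2-2b^2$ does not single out $(a,b)=(10,33)$: this Pell-type equation has infinitely many solutions, e.g.\ $(a,b)=(1,0)$ and $(a,b)=(199,660)$, and the solution $(1,0)$ also has the same Fujiki degree $\int h^4=3\cdot 22^2=1452$ and is also primitive in the lattice, so neither the Beauville--Bogomolov square, nor the Pl\"ucker degree, nor primitivity distinguishes it from $(10,33)$. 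What does distinguish them is the divisibility of $q(h,\cdot)$ in $H^2(S^{[2]},\Z)$ (it equals $2$ for $10\mu(\O_S(1))-33\delta$ and $1$ for $\mu(\O_S(1))$), but you cannot compute this divisibility for the Pl\"ucker class without already having the explicit identification of the special fibre; in \cite{DV} the coefficients $10$ and $33$ are read off directly from the explicit isomorphism $S^{[2]}\cong F(X_{\alpha_0})$, not from lattice numerics. So the final step, as written, is circular.
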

In this theorem $\widetilde{S\times S} \to S\times S$ is the blow-up of the diagonal, $\widetilde {E}$ is the exceptional divisor, and the pull-back is via the canonical double cover $\widetilde{S\times S} \to S^{[2]}$.

The goal of this paper is to study the variety $F(X)$ and its similarities with the variety of lines of a cubic fourfold, which has the following similar construction. Let $Y\subset \P^5=Gr(2,6)$ be a smooth hypersurface of degree $3$, and $F(Y)\subset Gr(2,6)$ be the variety of lines contained in $Y$:
$$
\xymatrix{
U \ar[d]^p \ar[r]^q &Y\subset \P^5\\
F(Y)\subset Gr(2,6)&},
$$
here $U$ is the universal variety consisting of pairs $(x,[l])$, where $x\in X$, the line $l$ is contained in $Y$, and $x\in l$. 

The notation $F(X)$ for the fourfold constructed by Debarre and Voisin and $F(Y)$ for the Fano variety of lines may look confusing, but the author decided to use them in order to emphasise the similarity between two varieties. The Fano variety of lines does not appear below, so $F(X)$ will unequivocally refer to the fourfold constructed by Debarre and Voisin.

\subsection{Statements of main results}

In this subsection we announce Theorems \ref{th_42} and \ref{th_43}, which provide evidences of similarities between the Fano varieties of lines on a cubic fourfold and the hyper-K\"ahler fourfold constructed by Debarre and Voisin. The rest of the paper will be devoted to the proofs of these theorems.

Following the ideas used in \cite{Voisin08} for the Fano variety of lines in a cubic fourfold, we are going to consider the incidence variety $I$ of pairs $([W_1],[W_2])\in F(X)\times F(X)$ such that the corresponding subvarieties $Z_{W_1}$ and $Z_{W_2}$ on $X$ have a common point. This common point, represented be a $3-$dimensional space, can be viewed as a point on the diagonal $\Delta_X$ in $X\times X$, and since we expect that $(p,p)(q,q)^{-1}\Delta_X$ is reducible and contains a diagonal as a component, we then define $I$ in the following way:
$$
I\subset(p,p)(q,q)^{-1}\Delta_X,
$$
$$
I=\overline{\left((p,p)(q,q)^{-1}\Delta_X\right)\smallsetminus \Delta_{F(X)}},
$$
where $p$ and $q$ were defined by the diagram (\ref{diag_new_fano}). The variety $I$ has a stratification:
$$
I=I^{(3)}\cup I^{(4)}\cup I^{(5)}\cup I^{(6)},
$$
where $I^{(i)}$ is the subvarity of $I$ consisting of pairs $(W_1,W_2)$ with $i$-dimensional intersection. For general $\alpha_X$, calculations give 
$$
\dim I^{(3)}=\dim I=6,\ \dim I^{(4)}=4,\ \dim I^{(5)}=3,
$$
and away from $\Delta_{F(X)}$ which could be contained in $I$, we have $\dim I^{(6)}\leq 3$. 

We define the variety of "triangles" as the closure of $I_3^o$:
\begin{multline*}
I^o_3=\{([W_1],[W_2],[W_3])| \dim(W_i\cap W_j)\geq3\ \forall i,j\\
\mbox{and}\ \dim\left( W_1\cap W_2\cap W_3\right)=0\}\subset F(X)\times F(X)\times F(X),
\end{multline*}
$$
I_3=\overline {I^o_{3}}\subset F(X)\times F(X)\times F(X).
$$
In Lemma \ref{l_45} below we will show that the natural projection $\pi_{12}: I_3\to I$ has degree one. One can also consider a bigger variety defined as
$$
I'_3=\{([W_1],[W_2],[W_3])| \dim(W_i\cap W_j)\geq3\ \forall i,j\}\subset F(X)\times F(X)\times F(X).
$$
and show that $I_3$ is an irreducible component of $I_3'$.

\begin{theorem}
\label{th_42}
\begin{enumerate}
\item \label{th_42_1} There exists a cycle $\gamma\in \CH_{10}(Gr(3,10))$ such that for any $([W_1],[W_2],[W_3])\in I_3$, the sum $Z_{W_1}+Z_{W_2}+Z_{W_3}\in \CH_9(X)$ is the restriction to $X$ of $\gamma$. 
\item \label{th_42_2}The $6-$dimensional subvariety $I_3\subset F(X)\times F(X)\times F(X)$ is a Lagrangian subvariety for the $(2,0)-$form $pr_1^*\sigma_{F(X)}+pr_2^*\sigma_{F(X)}+pr_3^*\sigma_{F(X)}$,
where $\sigma_{F(X)}$ denotes a $(2,0)-$form on $F(X)$ generating $H^{2,0}(F(X))$.
\end{enumerate}
\end{theorem}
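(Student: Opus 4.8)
The plan is to establish part (\ref{th_42_1}) first and then obtain part (\ref{th_42_2}) from it by the principle that a family of cycles with constant class in the Chow group induces the zero holomorphic form. The overall strategy mirrors the cubic case, where a triangle is the plane section $P\cap Y$ and all planes $P$ carry the same class in $\CH_2(\P^5)$.

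I would begin with the linear algebra of a point of $I_3^o$. Given $([W_1],[W_2],[W_3])\in I_3^o$, put $A=W_1\cap W_2$, $B=W_1\cap W_3$, $C=W_2\cap W_3$, each of dimension $3$, with $A\cap B=A\cap C=B\cap C=W_1\cap W_2\cap W_3=0$. Since $A+B\subset W_1$ has dimension $6=\dim W_1$, we get $W_1=A\oplus B$, and symmetrically $W_2=A\oplus C$, $W_3=B\oplus C$; a short count shows that $W_1+W_2+W_3$ is a single $9$-dimensional space $V_9=A\oplus B\oplus C$ containing all three $W_i$. Thus a triangle is exactly the datum of a $9$-space with a splitting into three $3$-spaces, and the nine-dimensional cycles $Z_{W_i}=Gr(3,W_i)$ are the three "coordinate" sub-Grassmannians of $Gr(3,V_9)$.

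To prove (\ref{th_42_1}) I would then produce, for each triangle, a $10$-dimensional subvariety $\Gamma_t\subset Gr(3,V_{10})$ built as a degeneracy (Schubert-type) locus from the flag data $(A,B,C,V_9)$, arranged so that $\Gamma_t$ contains the three divisors $Z_{W_1},Z_{W_2},Z_{W_3}$ and so that the Pl\"ucker section cutting out $X$ restricts on $\Gamma_t$ to a section with zero divisor exactly $Z_{W_1}+Z_{W_2}+Z_{W_3}$; equivalently $\Gamma_t\cap X=Z_{W_1}+Z_{W_2}+Z_{W_3}$ as a cycle. Being of degeneracy type, the class $[\Gamma_t]\in\CH_{10}(Gr(3,V_{10}))$ is computed by a Giambelli/Thom--Porteous formula that does not depend on the chosen flag, hence is a fixed class $\gamma$. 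The Gysin pullback along $i:X\hookrightarrow Gr(3,V_{10})$ then gives $i^*\gamma=[\Gamma_t\cap X]=Z_{W_1}+Z_{W_2}+Z_{W_3}$ in $\CH_9(X)$ for every $t$, which is the assertion of (\ref{th_42_1}) and forces the sum to be constant. As a consistency check, $\gamma\cdot h=3\,\sigma_{(4,4,4)}$, where $h=[X]$ is the Pl\"ucker class and each $[Z_{W_i}]=\sigma_{(4,4,4)}$ in $\CH_9(Gr(3,V_{10}))$. Finally, since the identity $Z_{W_1}+Z_{W_2}+Z_{W_3}=i^*\gamma$ is a closed condition and $I_3=\overline{I_3^o}$, it extends from $I_3^o$ to all of $I_3$.

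For part (\ref{th_42_2}), recall that $\dim I_3=6=\tfrac12\dim (F(X))^3$ and that $\omega:=pr_1^*\sigma_{F(X)}+pr_2^*\sigma_{F(X)}+pr_3^*\sigma_{F(X)}$ is holomorphic symplectic on $(F(X))^3$, so it suffices to prove $\omega|_{I_3}=0$. The form $\sigma_{F(X)}$ is obtained from the Hodge structure of $X$ through the cylinder correspondence of diagram (\ref{diag_new_fano}): up to scale $\sigma_{F(X)}=p_*q^*\eta$ for a fixed class $\eta$ in the relevant Hodge component of $X$, so that for a deformation of $[W]\in F(X)$ the value of $\sigma_{F(X)}$ pairs the infinitesimal variation of $Z_W\subset X$ against $\eta$. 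Evaluating $\omega$ on tangent vectors to $I_3$ then computes, up to a nonzero constant, the derivative along $I_3$ of the Abel--Jacobi/Hodge class of $Z_{W_1}+Z_{W_2}+Z_{W_3}$ paired with $\eta$; by part (\ref{th_42_1}) this cycle is constant in $\CH_9(X)$, so the derivative vanishes and $\omega|_{I_3}=0$, which together with the dimension count makes $I_3$ Lagrangian. The main obstacle is Step (\ref{th_42_1}): exhibiting a degeneracy model $\Gamma_t$ whose scheme-theoretic intersection with $X$ is \emph{exactly} $Z_{W_1}+Z_{W_2}+Z_{W_3}$, with no spurious components and all multiplicities equal to one, while keeping $[\Gamma_t]$ manifestly flag-independent; by contrast the passage in Step (\ref{th_42_2}) from constant Chow class to vanishing of the induced $2$-form is Voisin's infinitesimal argument (the derivative of the Abel--Jacobi map annihilates constant cycles), once the description of $\sigma_{F(X)}$ via the correspondence is in hand.
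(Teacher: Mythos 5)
Your reduction of part (\ref{th_42_2}) to part (\ref{th_42_1}) is sound and essentially coincides with the paper's argument: the paper passes to a desingularization $\tilde I_3$, observes that by (\ref{th_42_1}) the image of $U_*\circ T_*:\CH_0(\tilde I_3)\to \CH_9(X)$ is $\Z$, invokes the generalization of Mumford's theorem to conclude that $(U\circ T)^*:H^{11,9}(X)\to H^{2,0}(\tilde I_3)$ vanishes, and then uses the Debarre--Voisin fact $U^*H^{11,9}(X)=H^{2,0}(F(X))$ to write $\sigma_{F(X)}=U^*\eta_X$, forcing the restriction of $pr_1^*\sigma_{F(X)}+pr_2^*\sigma_{F(X)}+pr_3^*\sigma_{F(X)}$ to $\tilde I_3$ to be zero. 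Your infinitesimal Abel--Jacobi phrasing is the same principle, and your linear-algebra description of a triangle ($V_9=K_1\oplus K_2\oplus K_3$ with the $W_i$ the pairwise sums) matches the paper.

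The genuine gap is exactly where you flag it, and it is not a detail: part (\ref{th_42_1}) is the hard core of the theorem, and you never construct $\gamma$. You postulate a Schubert/degeneracy-type subvariety $\Gamma_t$ containing the three $Z_{W_i}$, with $\Gamma_t\cap X$ equal as a cycle to $Z_{W_1}+Z_{W_2}+Z_{W_3}$ and with flag-independent class, but you specify no bundle map, no locus, and no intersection computation; this is a plan, not a proof. The paper's actual construction (Proposition \ref{triangle}) indicates that no such clean model is available. It works inside $Gr(3,V_9)$: by Lemma \ref{l_48}, $\alpha'\in K_1^*\otimes K_2^*\otimes K_3^*$ restricts on the chart $O\cong \Hom(K_1,K_2)\oplus\Hom(K_1,K_3)$ to a nondegenerate pairing, so $X'|_O$ is a nondegenerate quadric containing $Z_{W_2}|_O$ and $Z_{W_3}|_O$ as linear subspaces; one joins them by a chain $A_1,\dots,A_{10}$ of $9$-dimensional linear spaces on the quadric and takes the \emph{alternating} sum $\overline B_1-\overline B_2+\cdots+\overline B_9$ of $10$-dimensional spaces with $B_i|_{X'|_O}=A_i\cup A_{i+1}$. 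Even after the delicate boundary analysis (Lemmas \ref{l_47}, \ref{l_49}, \ref{l_410}, \ref{l_411}), which shows the closures contribute no $9$-dimensional components inside $X'$ along $D_1,D_2$ and only $Z_{W_1}=D_3$ along $D_3$, the construction yields only $dZ_{W_1}+Z_{W_2}+Z_{W_3}$ for an unknown multiplicity $d\geq 0$, and the proof is completed by symmetrizing over the $W_i$ so that $(d+2)(Z_{W_1}+Z_{W_2}+Z_{W_3})$ is a restricted cycle. In other words, even the paper cannot arrange ``scheme-theoretic intersection exactly the triangle, multiplicity one, no spurious components,'' which is precisely what your plan demands; the step you defer is the theorem itself, and your proposed route to it (Thom--Porteous on an unspecified degeneracy locus) has no evident implementation.
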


The proof of Theorem \ref{th_42} starts in the next section and goes until the end of the paper.

Another similarity between Fano varieties of lines and Debarre-Voisin fourfolds is given in the next theorem (see \cite[Proposition 3.3]{Voisin08} for the corresponding results for the Fano variety of lines).

\begin{theorem}
\label{th_43}
\label{qu_relation}
There is a quadratic relation in $\CH^4(F(X)\times F(X))$ of the form
$$
I^2=\alpha \Delta_{F(X)}+\beta I^{(4)}+I\cdot \Gamma_1+ \Gamma_2+ q^*p_*\Gamma_3,
$$
where $\Gamma_1\in \CH^2(Gr(6,10)^2)|_{F(X)\times F(X)}$, $\Gamma_2\in \CH^4(Gr(6,10)^2)|_{F(X)\times F(X)}$ and $\Gamma_3$ is proportional to $\Delta_{X*}c_2(Gr(3,10)) \in \CH(X\times X)$.
\end{theorem}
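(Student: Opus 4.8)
The plan is to compute the self--intersection $[I]^2\in\CH^4(F(X)\times F(X))$ directly, analysing $I$ stratum by stratum and carefully tracking the excess intersection along its singular locus and along the diagonal; the classes pulled back from the ambient $Gr(6,10)\times Gr(6,10)$ will absorb the generic part of the answer and furnish the terms $\Gamma_1,\Gamma_2$. First I would realise $I$, away from $\Delta_{F(X)}$, through the universal family. Since $(q,q)^{-1}\Delta_X=U\times_X U=\{(V_3,W_1,W_2):V_3\subset W_1\cap W_2\}$, the locus $(p,p)(q,q)^{-1}\Delta_X$ is exactly $\{([W_1],[W_2]):\dim(W_1\cap W_2)\ge3\}$, and $I$ is the closure of its complement to $\Delta_{F(X)}$. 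Writing $[I]$ as the pushforward $(p\times p)_*(q\times q)^*[\Delta_X]$ corrected by the diagonal contribution, I would expand $[I]^2$ by the projection formula, reducing the whole computation to intersection theory on $U\times_X U$ and on $U$, where every term is a Grassmann--bundle calculation.

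I then extract the three kinds of contributions. Over the open stratum $I^{(3)}$, where $W_1\cap W_2$ is exactly three--dimensional and $Z_{W_1}\cap Z_{W_2}$ is a single reduced point, $I$ is smooth of codimension $2$ and the self--intersection formula gives $j_*c_2(N_{I/F(X)^2})$; expressing $c_2$ of the (virtual) normal bundle through the normal bundle of $F(X)\subset Gr(6,10)$, which equals $\Lambda^3 S^*|_{F(X)}$ for $S$ the tautological rank--$6$ bundle, rewrites this part as a combination $I\cdot\Gamma_1+\Gamma_2$ of ambient Schubert/Pl\"ucker classes. Along the singular stratum $I^{(4)}$, where the overlap jumps to dimension $4$, the self--intersection formula acquires a correction supported on $I^{(4)}$; a local model for the incidence at such a point (the transition from a $3$-- to a $4$--dimensional intersection) gives the coefficient $\beta$ and the term $\beta I^{(4)}$. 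Finally, the fibre of the first projection $I\to F(X)$ over $[W_1]$ is the surface $S_{W_1}=\{[W_2]:\dim(W_1\cap W_2)\ge3\}$, which passes through $[W_1]$, so the diagonal excess is a rank--$2$ phenomenon governed by the second Chern class of the ambient Grassmannian at the intersection point; it is precisely $c_2(Gr(3,10))$ restricted along the diagonal that appears, producing the correspondence term $q^*p_*\Gamma_3$ with $\Gamma_3$ proportional to $\Delta_{X*}c_2(Gr(3,10))$, together with a residual multiple $\alpha\Delta_{F(X)}$.

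To determine the constants $\alpha,\beta$ I would either compare the two sides first in $H^8(F(X)^2)$, using the known deformation type of $F(X)$ and its Beauville--Bogomolov form, and then upgrade to $\CH^4$, or restrict the identity to a general fibre $\{[W_1]\}\times F(X)$, where it becomes a self--intersection statement for the surface $S_{W_1}\subset F(X)$ with computable intersection numbers. The upgrade from cohomology to the Chow group is where Theorem \ref{th_42}(\ref{th_42_1}) enters: the constancy of $Z_{W_1}+Z_{W_2}+Z_{W_3}$ in $\CH_9(X)$ rigidifies the triangle correspondence and forces the genuinely non--ambient part of $[I]^2$ to collapse onto $\Delta_{F(X)}$ and the $c_2$--term, with no further cycle surviving. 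I expect the main obstacle to be exactly this diagonal analysis: isolating the excess coming from the universal family as $W_2\to W_1$ and checking that it is governed by $c_2(Gr(3,10))$ rather than another Chern class, while keeping the multiplicities $\alpha,\beta$ consistent across the three strata.
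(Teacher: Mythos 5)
Your proposal shares the paper's general skeleton --- realise $I$ through the universal family, compute a self-intersection via normal bundles, and sort the outcome into ambient classes, following \cite[Proposition 3.3]{Voisin08} --- but the bookkeeping of which term comes from where is wrong, and this is fatal. You claim the open stratum contributes only $I\cdot\Gamma_1+\Gamma_2$, with $c_2$ of the normal bundle expressible through $N_{F(X)/Gr(6,10)}=\Lambda^3S^*|_{F(X)}$ and Schubert classes, and that the term $q^*p_*\Gamma_3$ arises as a diagonal excess. In the paper it is exactly the opposite. Away from $\Delta_{F(X)}\cup I^{(4)}$, the projection $(p,p)$ identifies $\tilde I_o=(q,q)^{-1}\Delta_X$ (restricted to that open set) with $I_o$, and the normal bundle sequence
$$
0\to T_{U\times U/F\times F}|_{\tilde I_o}\to N_{\tilde I_o/U\times U}\to (p,p)^*N_{I_o/F\times F}\to 0,
\qquad N_{\tilde I_o/U\times U}=(q,q)^*T_X,
$$
shows that $c_2(N_{I_o/F\times F})$ involves Chern classes of $Gr(3,10)$ pulled back through the intersection-point map $([W_1],[W_2])\mapsto[W_1\cap W_2]$; these are \emph{not} restrictions of classes from $Gr(6,10)^2$, so your reduction of the open part to ambient classes fails. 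The terms divisible by $c_1(Gr(3,10))$ can be traded for ambient classes, because $c_1(Gr(3,10))\cdot\Delta_X$ is proportional to $\Delta_{Gr(3,10)}|_{X\times X}$ --- that is precisely the source of $\Gamma_2$ --- but the part proportional to $c_2(Gr(3,10))$ cannot, and it survives as the correspondence term $(p,p)_*(q,q)^*\Gamma_3$, which is supported on $I$, not on the diagonal. Conversely, any dimension-$4$ class supported on $\Delta_{F(X)}$ is a multiple of $[\Delta_{F(X)}]$, so no analysis of the diagonal can ever produce $q^*p_*\Gamma_3$: your plan would emerge from the open-stratum computation with a leftover non-ambient class and no term to absorb it.

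The second problem is structural: the stratum-by-stratum excess apparatus you propose (a local model along $I^{(4)}$, an excess bundle at the diagonal, determination of $\alpha$ and $\beta$ cohomologically or by restriction to fibres, and an appeal to Theorem~\ref{th_42}(\ref{th_42_1}) to pass from cohomology to Chow groups) is both unnecessary and unsubstantiated. The theorem leaves $\alpha$ and $\beta$ unspecified, and the paper gets them for free: it proves $I_o^2=I_o\cdot\Gamma_1+\Gamma_2+\Gamma_3$ in $\CH^4\bigl(F\times F\smallsetminus(\Delta_{F(X)}\cup I^{(4)})\bigr)$ and then applies the localisation exact sequence, so the difference $I^2-(I\cdot\Gamma_1+\Gamma_2+q^*p_*\Gamma_3)$ is automatically of the form $\alpha\Delta_{F(X)}+\beta I^{(4)}$, with no excess intersection theory at all. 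This also sidesteps the difficulty in your starting identity $[I]=(p,p)_*(q,q)^*[\Delta_X]$: the scheme $(q,q)^{-1}\Delta_X$ contains the $13$-dimensional component $\Delta_U$ (and higher-dimensional pieces over the deeper strata), so this pullback has large excess contributions that you never control. Finally, the proof of this theorem in the paper is logically independent of Theorem~\ref{th_42}; the constancy of the triangle cycle plays no role here, and invoking it to ``upgrade from cohomology to Chow'' is not an argument.
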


We will prove Theorem \ref{th_43} in the next section.

\section{Proofs of main results}
\label{subsection_quadratic}

We start with the proof of Theorem \ref{th_42}.

\begin{proof}[Proof of Theorem \ref{th_42}.]
Item (\ref{th_42_1}) will be proved in Proposition \ref{triangle} below.

From Lemma \ref{lemma_I_6} and Lemma \ref{l_45} below it is follows that $I_3$ is a $6-$di\-men\-sional subvariety. Let us show that (\ref{th_42_1}) implies (\ref{th_42_2}). 

Let $\tilde I_3$ be a desingularisation of $I_3$. Let $T$ be the natural correspondence between $\tilde I_3$ and $F(X)$. In particular, we have a map $T_*:\CH_0(\tilde I_3)\to\CH_0(F(X))$. Note that, by (\ref{th_42_1}), the image of the composition 
$$
U_*\circ T_*: \CH_0(\tilde I_3)\to \CH_0(F(X))\to \CH_9(X)
$$
is $\mathbb Z$. Therefore, by the generalisation of Mumford's theorem \cite{M}, the map
$$
(U\circ T)^*:H^{11,9}(X)\to H^{2,0}(\tilde I_3)
$$
is zero. But $U^*H^{11,9}(X)=H^{2,0}(F(X))$ by \cite{DV},
so $\sigma_{F(X)}=U^*\eta_X$ for some $\eta_X\in H^{11,9}(X)$ and thus $\left(pr_1^*\sigma_{F(X)}+pr_2^*\sigma_{F(X)}+pr_3^*\sigma_{F(X)}\right)|_{\tilde I_3}=(U\circ T)^*\eta_X=0$, which proves that the subvariety $I_3$ is Lagrangian.
\end{proof}

\begin{proof}[Proof of Theorem \ref{qu_relation}]
We follow the line of the proof of the similar statement for the Fano variety of lines (see \cite[Proposition 3.3]{Voisin08}).

We are going to establish a relation of the following form:
$$
I_o^2=I_o\cdot \Gamma_1+ \Gamma_2+\Gamma_3,
$$
in $\CH^4(F\times F\smallsetminus\left(\Delta_{F(X)}\cup I^{(4)}\right))$, where $I_o$ is the restriction of $I$ to $(F\times F\smallsetminus\left(\Delta_{F(X)}\cup I^{(4)}\right))$. The result will follow by the localisation exact sequence.

We recall that $I$ is the image in $F\times F$ of $\tilde I=(q,q)^{-1}\Delta_X$ under the projection $(p,p): \tilde I\to I$. Since $(p,p)$ is an isomorphism away from $\Delta_{F(X)}\cup I^{(4)}$, we have a local isomorphism between $I_o$ and $\tilde I_o=(p,p)^{-1}(I_o)$. 

We denote some Chern classes in the following short way:
$$
c_i^j(3)=pr_j^*c_i(Gr(3,10))\in\CH_i(Gr(3,10)),
$$
$$
c_i^j(6)=pr_j^*c_i(Gr(6,10))\in\CH_i(Gr(6,10)).
$$  
When we speak about $\CH_*(U\times U)$, where $U$ is the universal variety in the diagram (\ref{diag_new_fano}), to keep notation simple we will denote $(p,p)^*(c_i^j(6))$ as $c_i^j(6)$ and $(q,q)^*(c_i^j(3))$ as $c_i^j(3)$.

We have the normal sequence:
$$
0\to T_{U\times U/F\times F|_{\tilde I_0}}\to N_{\tilde I_0/U\times U}\to (p,p)^*N_{I_0/F\times F}\to 0,
$$
therefore $(p,p)^*N_{I_o/F\times F}$ can be expressed as a polynomial in the Chern classes of the normal bundle $N_{\tilde I_o/U\times U}$ and in Chern classes of $T_{U\times U/F\times F|_{\tilde I_o}}$. The later ones are polynomial in $c_i^j(6)$ and $c_i^j(3)$. Next, we see that $\tilde I=(q,q)^{-1}(\Delta_X)$, therefore
$$
c_i(N_{\tilde I_o/U\times U})=(q,q)^*c_i(T_X),
$$
but $c_i(T_X)$ are polynomial in $c_j(Gr(3,V_{10}))$. So we have that
$$
I_o^2=(p,p)_*(P\cdot \tilde I),
$$
where $P$ is a quadratic polynomial in $c_i^j(6)$ and $c_i^j(3)$. The polynomial $P$ can be divided in three parts:
\begin{enumerate}
\item the part containing only $c_i^j(6)$. Since all these terms have from $(p,p)^*(c)$ for some $c\in\CH_2(F(X)\times F(X))$, the intersection with $\tilde I$ and projection $(p,p)_*$ gives the term $\Gamma_1\cdot I_o$.
\item the part divisible by $c_1^j(3)$.  The term $c_1^j(3)$ has the from $(q,q)^*(c)$, for some $c\in\CH_2(X\times X)$, and its intersection with $\tilde I$ can be represented as $(q,q)^*(c\cdot \Delta_X)$. Since $c_1(Gr(3,10))\cdot{\Delta_X}$ is proportional to $\Delta_{Gr(3,10)}|_{X\times X}$, it is a cycle coming from $\CH(Gr(3,10)\times Gr(3,10))$. Therefore this part gives the term $\Gamma_2$ in the final relation.
\item the part proportional to $c_2^j(3)$. It will lead to the term
$$
(p,p)_*(q,q)^*\Gamma_3=(p,p)_*(q,q)^*(c_2(Gr(3,10)))
$$
in the final relation.
\end{enumerate}

\end{proof}

\subsection{Technical lemmas: open part}
We start with the study of the local geometry of $I_3$. Let $([W_1],[W_2],[W_3])$ be a general point of $I_3$. By definition of $I_3$, the three spaces
$$
K_1=W_2\cap W_3,\ K_2=W_3\cap W_1,\ K_3=W_1\cap W_2 
$$ 
are pairwise transversal. In particular, we have decompositions
$$
W_1=K_2\oplus K_3, W_2=K_3\oplus K_1, W_3=K_1\oplus K_2, V_9=K_1\oplus K_2\oplus K_3,
$$
and since $\alpha_X$ vanishes on $W_1, W_2, W_3$, the restriction $\alpha'=i^*_{V_9}\alpha_X$ belongs to $K_1^*\otimes K_2^*\otimes K_3^*$. We note that $\alpha'$ defines a hypersurface $X'$ in $Gr(3,V_9)$ and $X'$ contains $Z_{W_1}, Z_{W_2},$ and $Z_{W_3}$, where the notation $Z_{W_i}$  was introduced above. 

Let $O$ be an open chart of $Gr(3, K_1\oplus K_2\oplus K_3)$ defined in the following way:
$$
O=\left\{V:\dim \pi_{K_1}(V)=3\right\}.
$$
We note that $O$ is naturally isomorphic to the affine space $\mathrm {Hom}(K_1, W_1)$ or to $\mathrm {Hom}(K_1, K_2)\oplus \mathrm {Hom}(K_1, K_3)$. The following lemma shows that 
$\alpha'|_O$ is a quadratic form and hence $X'|_O$ is a quadratic hypersurface in $O$.

\begin{lemma}
\label{l_48}
Let $\alpha'\in K_1^*\otimes K_2^*\otimes K_3^*$. Its restriction $\alpha'|_O$ defines a pairing between $\mathrm {Hom}(K_1, K_2)$ and $\mathrm {Hom}(K_1, K_3)$, which in some basis can be represented by a $9\times 9-$matrix 
$$
\left(
\begin{matrix}
0&-Q_3&-Q_2\\
Q_3&0&-Q_1\\
Q_2&Q_1&0
\end{matrix}
\right),
$$
where $Q_1,Q_2,Q_3$ are $3\times 3$-matrices. Moreover, this pairing is non-degenerate for a general choice of $\alpha'$.
\end{lemma}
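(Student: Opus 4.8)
The plan is to put affine coordinates on the chart $O$, compute $\alpha'|_O$ explicitly as a function of those coordinates, read off the block matrix, and then settle non-degeneracy by producing one good tensor. Fix bases $(e_1,e_2,e_3)$ of $K_1$, $(f_1,f_2,f_3)$ of $K_2$, $(g_1,g_2,g_3)$ of $K_3$. A point of $O$ is the graph of a pair $(A,B)\in\Hom(K_1,K_2)\oplus\Hom(K_1,K_3)$, spanned by the vectors $v_i=e_i+Ae_i+Be_i$. Setting $\alpha_{ijk}=\alpha'(e_i\wedge f_j\wedge g_k)$ (the only components that survive, since $\alpha'\in K_1^*\otimes K_2^*\otimes K_3^*$), I would expand $v_1\wedge v_2\wedge v_3$ and keep only the ``mixed'' terms, obtaining
\[
\alpha'(v_1\wedge v_2\wedge v_3)=\sum_{\sigma\in S_3}\mathrm{sgn}(\sigma)\sum_{j,k}\alpha_{\sigma(1),j,k}\,A_{j,\sigma(2)}\,B_{k,\sigma(3)}.
\]
The crucial point, which makes $\alpha'|_O$ a quadratic form rather than a general cubic, is that $\alpha'$ vanishes on each of $W_1=K_2\oplus K_3$, $W_2=K_3\oplus K_1$, $W_3=K_1\oplus K_2$; hence $\alpha'$ has no component with two indices in the same $K_\bullet$, so there are no constant, linear, pure-$A^2$ or pure-$B^2$ contributions. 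The expression above is therefore bilinear in $(A,B)$, i.e.\ exactly a pairing between $\Hom(K_1,K_2)$ and $\Hom(K_1,K_3)$, and $X'\cap O$ is the associated quadric.

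Next I would extract the matrix. Grouping the $9$ coordinates of $A$ (resp.\ of $B$) into three blocks according to their $K_1$-column index $b$ (resp.\ $c$), the formula exhibits the pairing as a $3\times3$ array of $3\times3$ blocks. The $(b,c)$-block vanishes when $b=c$ (no $\sigma\in S_3$ has $\sigma(2)=\sigma(3)$), and for $b\neq c$ it equals $\mathrm{sgn}(\sigma)\,Q_a$, where $Q_a=(\alpha_{a j k})_{j,k}$ and $a$ is the index complementary to $\{b,c\}$. Since transposing $b\leftrightarrow c$ with $a$ fixed swaps $\sigma(2),\sigma(3)$ and hence flips the sign, the off-diagonal blocks come in the antisymmetric pattern $\pm Q_a$; after a harmless sign/basis normalisation this is precisely the displayed matrix $\begin{pmatrix}0&-Q_3&-Q_2\\ Q_3&0&-Q_1\\ Q_2&Q_1&0\end{pmatrix}$.

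For non-degeneracy I would analyse the kernel of this matrix $M$ directly. Writing a kernel vector as $(x,y,z)$ with $x,y,z\in\C^3$ (the three column-blocks), the equations read
\[
Q_3y=-Q_2z,\qquad Q_3x=Q_1z,\qquad Q_2x=-Q_1y.
\]
Assuming $Q_3$ invertible, the first two give $x=Q_3^{-1}Q_1z$ and $y=-Q_3^{-1}Q_2z$, and substituting into the third yields the ``twisted commutator'' condition $\bigl(Q_2Q_3^{-1}Q_1-Q_1Q_3^{-1}Q_2\bigr)z=0$; thus $M$ is invertible as soon as this $3\times3$ operator is. Taking $Q_3=I$ reduces this to the ordinary commutator $[Q_2,Q_1]$, and choosing $Q_1=\left(\begin{smallmatrix}0&0&1\\1&0&0\\0&1&0\end{smallmatrix}\right)$, $Q_2=\mathrm{diag}(1,2,3)$ gives $[Q_2,Q_1]=\left(\begin{smallmatrix}0&0&-2\\1&0&0\\0&1&0\end{smallmatrix}\right)$, of determinant $-2\neq0$, so $M$ is non-singular for this tensor. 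Since $\det M$ is a polynomial in the $\alpha_{ijk}$ which are free parameters, and it is nonzero at this point, it is nonzero for general $\alpha'$. I expect the main obstacle to be exactly this last step: the ``obvious'' symmetric choices (all $Q_a$ equal, or simultaneously diagonalisable) force the reduced $3\times3$ blocks to be antisymmetric and hence singular, so one is obliged to use genuinely non-commuting $Q_a$; the secondary nuisance is bookkeeping the permutation signs carefully enough to land on the precise antisymmetric block pattern in the statement.
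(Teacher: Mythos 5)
Your proposal is correct and follows essentially the same route as the paper: both expand $\alpha'$ on the chart $O$ keeping only the mixed $K_1$--$K_2$--$K_3$ terms to get the bilinear pairing and its antisymmetric block pattern, both reduce non-degeneracy (assuming $Q_3$ invertible) to invertibility of the twisted commutator $Q_2Q_3^{-1}Q_1-Q_1Q_3^{-1}Q_2$, and both verify this on essentially the same example (identity, a diagonal matrix with distinct entries, and a cyclic permutation matrix), concluding by openness of the condition. The only cosmetic difference is that you solve the kernel equations directly where the paper performs block row operations.
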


\begin{proof}
Let $(e_1,e_2,e_3)$ be a basis of $K_1$, $(e_4,e_5,e_6)$ be a basis of $K_2$, and $(e_7,e_8,e_9)$ be a basis of $K_3$. Let a point $p\in \mathrm {Hom}(K_1, W_1)$ be given by the matrix (\ref{matrix_V}). We evaluate $\alpha'$ on  the trivector 
\begin{multline*}
\left(e_1+(n_1e_4+n_2e_5+n_3e_6)+(m_1e_7+m_2e_8+m_3e_9)\right)\\
\wedge
\left(e_2+(n_4e_4+n_5e_5+n_6e_6)+(m_4e_7+m_5e_8+m_6e_9)\right)\\
\wedge
\left(e_3+(n_7e_4+n_8e_5+n_9e_6)+(m_7e_7+m_8e_8+m_9e_9)\right).
\end{multline*}

As $\alpha'\in K_1^*\otimes K_2^*\otimes K_3^*$, we can as well evaluate $\alpha'$ on
\begin{multline*}
e_1\wedge (n_4e_4+n_5e_5+n_6e_6)\wedge(m_7e_7+m_8e_8+m_9e_9)\\
-e_1\wedge(n_7e_4+n_8e_5+n_9e_6)\wedge(m_4e_7+m_5e_8+m_6e_9)\\
+e_2\wedge(n_7e_4+n_8e_5+n_9e_6)\wedge(m_1e_7+m_2e_8+m_3e_9)\\
-e_2\wedge(n_1e_4+n_2e_5+n_3e_6)\wedge(m_7e_7+m_8e_8+m_9e_9)\\
+e_3\wedge(n_1e_4+n_2e_5+n_3e_6)\wedge(m_4e_7+m_5e_8+m_6e_9)\\
-e_3\wedge(n_4e_4+n_5e_5+n_6e_6)\wedge(m_1e_7+m_2e_8+m_3e_9).
\end{multline*}
On the other hand, $\alpha'$ can be written as 
$e_1^*\wedge Q_1+e_2^*\wedge Q_2+e_3^*\wedge  Q_3$, where $Q_i\in K_2^*\otimes K_3^*$. This gives the desired matrix presentation.

Assuming that $Q_3$ is non-degenerate and using operations on lines, we can transform the matrix to 
$$
\left(
\begin{matrix}
0&1&Q_3^{-1}Q_2\\
1&0&-Q_3^{-1}Q_1\\
0&0&-Q_2Q_3^{-1}Q_1+Q_1Q_3^{-1}Q_2
\end{matrix}
\right).
$$
Now we see that the pairing is non-generate if and only if $-Q_2Q_1^{-1}Q_3+Q_3Q_1^{-1}Q_2$ is non-degenerate. This condition is an open condition and it is true for the following choice:
$$
Q_1=
\left(
\begin{matrix}
1&0&0\\
0&2&0\\
0&0&3
\end{matrix}
\right),
Q_2=
\left(
\begin{matrix}
0&1&0\\
0&0&1\\
1&0&0
\end{matrix}
\right),
Q_3=
\left(
\begin{matrix}
1&0&0\\
0&1&0\\
0&0&1
\end{matrix}
\right).
$$
Hence it is true for a general choice of $Q_1,Q_2,Q_3$.
\end{proof}

\begin{proposition}
\label{triangle}
\begin{enumerate}
\item \label{triangle_small}
The 9-dimensional cycle $Z_{W_1}+Z_{W_2}+Z_{W_3}$  in $\CH_9(X')$ is the restriction of a cycle $Z$ of $Gr(3,K_1\oplus K_2\oplus K_3)$.
\item \label{triangle_big} The 9-dimensional cycle $Z_{W_1}+Z_{W_2}+Z_{W_3}$ in $\CH_9(X)$ is the restriction of a cycle $Z'$ of $Gr(3,10)$. In particular, $Z_{W_1}+Z_{W_2}+Z_{W_3}$ is constant in $\CH_9(X)$, i.e., it does not depend on the choice of 
$([W_1],[W_2],[W_3])\in I_3$.
\end{enumerate}
\end{proposition}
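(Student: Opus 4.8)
The plan is to prove (\ref{triangle_small}) by a local analysis on $Gr(3,V_9)$, exploiting that by Lemma~\ref{l_48} the hyperplane section $X'$ is, on a suitable affine chart, an honest quadric, and then to deduce the global statement (\ref{triangle_big}) on $Gr(3,V_{10})$ by pushing the resulting cycle forward along the inclusion $\iota\colon Gr(3,V_9)\hookrightarrow Gr(3,V_{10})$. I stress that a purely cohomological input (the Lefschetz isomorphism $H^{16}(Gr(3,V_9))\cong H^{16}(X')$) does not suffice, since the downstream application of Mumford's theorem requires an equality of actual classes in $\CH_9(X)$; this is exactly what the quadric geometry will supply.

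For (\ref{triangle_small}) I would introduce the three symmetric affine charts $O_a=\{V:\dim\pi_{K_a}(V)=3\}$. On $O_1\cong\Hom(K_1,K_2)\oplus\Hom(K_1,K_3)$, with coordinates $(\phi_2,\phi_3)$, Lemma~\ref{l_48} identifies $X'\cap O_1$ with the affine quadric $Q_1=\{B(\phi_2,\phi_3)=0\}$, where $B$ is the pairing given by the displayed $9\times 9$ matrix, non-degenerate for general $\alpha'$. A direct check shows that the three sub-Grassmannians meet this chart as
$$
Z_{W_1}\cap O_1=\emptyset,\qquad Z_{W_2}\cap O_1=\{\phi_2=0\}=:L_2,\qquad Z_{W_3}\cap O_1=\{\phi_3=0\}=:L_3,
$$
so that $L_2,L_3$ are precisely the two families of maximal linear subspaces through the vertex of the even-dimensional quadric $Q_1$. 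The key input is the classical ruling relation on a smooth quadric: the sum of the two rulings equals a power of the hyperplane class of its projective closure. This gives, over $O_1$, the rational equivalence $\sum_i[Z_{W_i}]=[L_2]+[L_3]=j_1^*\xi$, where $j_1\colon X'\hookrightarrow Gr(3,V_9)$ and $\xi$ is a fixed power of an ample class; tracking the computation through the Plücker embedding identifies $\xi$ with a multiple $c\,h^{8}$ of the Plücker class $h=c_1(\O_{Gr}(1))$, which is the point that makes $\xi$ extend to $Gr(3,V_{10})$. The same computation on $O_2$ and $O_3$ yields the identical relation by symmetry, the missing sub-Grassmannian being disjoint from the corresponding chart.

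It remains to upgrade these three open equalities to a single relation in $\CH_9(X')$. By the localisation sequence, the discrepancy $\delta:=\sum_i[Z_{W_i}]-j_1^*(c\,h^{8})$ is supported, for each $a$, on the boundary $B_a:=X'\setminus O_a=\{V\in X':V\cap W_a\neq 0\}$; thus $\delta$ is simultaneously supported on $B_1$, $B_2$ and $B_3$. The heart of the argument, and the step I expect to be the main obstacle, is to show that such a class must vanish, by controlling the Chow groups of these boundary loci and of their common intersection $B_1\cap B_2\cap B_3$ (a dimension count, respectively a direct identification of the relevant cycles, showing that no $9$-dimensional class can survive). This establishes (\ref{triangle_small}) with $Z=c\,h^{8}\in\CH_{10}(Gr(3,V_9))$.

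Finally, (\ref{triangle_big}) follows formally. Since $X'=X\cap Gr(3,V_9)=\iota^{-1}(X)$ is cut out on $Gr(3,V_9)$ by the restriction of the same Plücker linear form, the inclusions $j,j_1$ of the two divisors and $\iota,\iota'$ of the sub-Grassmannians form a Cartesian square, and the base-change identity $j^*\iota_*=\iota'_*\,j_1^*$ transports (\ref{triangle_small}) to $\sum_i[Z_{W_i}]=j^*(\iota_*Z)$ in $\CH_9(X)$. Writing $Z=c\,h_9^{8}=c\,\iota^*h_{10}^{8}$ and applying the projection formula gives $\iota_*Z=c\,h_{10}^{8}\cdot[Gr(3,V_9)]$ in $\CH_{10}(Gr(3,V_{10}))$. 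The Schubert class $[Gr(3,V_9)]$ of $3$-planes contained in a hyperplane is independent of the choice of hyperplane $V_9\subset V_{10}$, hence of the triangle; therefore $Z':=\iota_*Z$ is a fixed cycle and $\sum_i[Z_{W_i}]=j^*Z'$ is constant in $\CH_9(X)$, as claimed.
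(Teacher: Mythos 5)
Your chart-level analysis is correct and is in fact the same starting point as the paper's: Lemma \ref{l_48} makes $X'\cap O_1$ a rank-$18$ affine quadric cone, $Z_{W_2}$ and $Z_{W_3}$ meet the chart in the two rulings while $Z_{W_1}$ misses it entirely (in the paper's notation $Z_{W_1}=D_3$), and the paper realizes your ruling relation at the level of cycles by a telescoping chain of $10$-planes (its $B_i$, with $B_i|_Q=A_i\cup A_{i+1}$, so that $(B_1-B_2+\cdots+B_9)|_Q=Z_{W_2}|_O+Z_{W_3}|_O$). The genuine gap is precisely the step you defer: passing from the three chart-wise identities to an identity in $\CH_9(X')$. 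First, the mechanism you propose is logically flawed: the localisation sequence gives, for each $a$ separately, \emph{some} representative of $\delta$ supported on your boundary locus $B_a=X'\smallsetminus O_a$, but lying in the image of $\CH_9(B_a)\to\CH_9(X')$ for each $a$ does not imply lying in the image of $\CH_9(B_1\cap B_2\cap B_3)\to\CH_9(X')$ (already in $\P^2$ the class of a line is supported on each of two distinct lines but not on their intersection point). Second, the hoped-for vanishing $\delta=0$ is exactly what fails and what makes the problem hard: when the paper closes up its chart-level equivalence, the boundary stratum $D_3=Z_{W_1}$ appears with an unknown multiplicity $d\geq 0$, and proving that the strata $D_1$, $D_2$ contribute only in dimension $\leq 8$ costs all of Lemmas \ref{l_47}, \ref{l_49}, \ref{l_410}, \ref{l_411}, and holds only for a general choice of the chain; even then the paper does not obtain $\delta=0$, only that $dZ_{W_1}+Z_{W_2}+Z_{W_3}$ is a restriction, and it must symmetrize over the $W_i$ to conclude that $(d+2)(Z_{W_1}+Z_{W_2}+Z_{W_3})$ is restricted. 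So the ``main obstacle'' you acknowledge is not a technical residue; it is the entire content of the proposition.

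Moreover, the specific cycle you propose is provably wrong: $Z$ cannot be a multiple of $h^8$. If $\sum_i[Z_{W_i}]=j_1^*(c\,h^8)$ held in $\CH_9(X')$, pushing forward by $j_{1*}$ and using the projection formula ($j_{1*}j_1^{*}$ is multiplication by $[X']=h$) would give $3\sigma_{3,3,3}=c\,h^9$ in $\CH_9(Gr(3,V_9))$, since each $Z_{W_i}=Gr(3,W_i)$ has class $\sigma_{3,3,3}$; but by Pieri's formula $h^9$ has strictly positive coefficients on \emph{every} Schubert class of codimension $9$, so this is impossible for any $c$. Hence the cycle $\gamma$ of Theorem \ref{th_42}(\ref{th_42_1}) is necessarily a combination of several Schubert classes, which your chart computation cannot detect: any class restricted from the Grassmannian dies on $X'\cap O_a$ because $\CH_{10}(O_a)=0$, so the chart identities place no constraint whatsoever on $Z$. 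Your deduction of part (\ref{triangle_big}) from part (\ref{triangle_small}) is correct and usefully more explicit than the paper's one-line claim --- the base change $j^*\iota_*=\iota'_*j_1^*$ and the constancy of $[Gr(3,V_9)]=\sigma_{1,1,1}$ go through for an arbitrary $Z$, since the restriction $\CH^*(Gr(3,V_{10}))\to\CH^*(Gr(3,V_9))$ is surjective --- but part (\ref{triangle_small}) itself remains unproven.
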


\begin{proof}
The statement (\ref{triangle_big}) is an immediate consequence of (\ref{triangle_small}). We prove (\ref{triangle_small}).

Let us represent $Gr(3, K_1\oplus K_2\oplus K_3)$ as the union of the chart $O$ (as above), and subvarieties $D_1$, $D_2$, $D_3$ where
$$
O=\left\{V:\dim \pi_{K_1}(V)=3\right\},
$$
$$
D_k=\left\{V:\dim \pi_{K_1}(V)=3-k\right\}.
$$

Due to Lemma \ref{l_48}, the form $\alpha'|_O$ defines a non-degenerate quadratic hypersurface $Q=X'|_O$. Using this fact we are going to represent cycle $(Z_{W_2}+Z_{W_3})|_O$ as the sum 
$$
(B_1-B_2+B_3-\ldots+B_9)|_{Q},
$$
where $B_i$ are $10-$dimensional subspaces of $O$. We can take
$$
B_i=\left<v_i,v_{i+1},\ldots,v_9,v^*_1,\ldots,v^*_{i} \right>,
$$
where $(v_1,v_2,\ldots,v_9)$ is an arbitrary basis of $K_2^3$ and $(v_1^*,v_2^*,\ldots,v_9^*)$ is the dual basis in $K_3^3$. 
Now if $A_1=K_2^3$, $A_{10}=K_3^3$, and
$$
A_i=\left<v_i,v_{i+1},\ldots,v_9,v^*_1,\ldots,v^*_{i-1} \right>
$$
for $2\leq i\leq 9$, it is easy to see that $A_i$ are $9-$dimensional affine spaces contained in hypersurface $Q$. We also have $A_1=Z_{W_2}|_O$ and $A_{10}=Z_{W_3}|_O$. Moreover, the restriction $B_i|_{Q}$ is $A_i\cup A_{i+1}$ for each $i$. Therefore $(B_1-B_2+B_3-\ldots+B_9)|_{Q}=(Z_{W_2}+Z_{W_3})|_O$.

Let $\overline{B}_i\subset Gr(3,V_9)$ be the Zariski closure of $B_i\subset O$. We are going to take the cycle $(\overline B_1-\overline B_2+\overline B_3-\ldots+\overline B_9)$ as the desired cycle in $Gr(3,V_9)$. To finish the proof we need to investigate the boundaries $B'_{ik}=\overline B_i\cap D_k$.
For $k=1,2$, we will prove in Lemma \ref{l_47} that the intersection of $B'_{ik}$ and $X'\cap D_k$ has dimension at most $8$ for a general choice of the basis $(v_1, \ldots,v_9)$. For $k=3$, we note that $Z_{W_2}$ identifies to $\mathrm {Hom}(K_1,K_3)$, via the isomorphism $W_2=K_1\oplus K_3$, therefore the cycle $Z_{W_1}$ is contained in the complement of $O$ and in fact equal to $D_3$.
We have an inclusion:
$$
\overline B_i\cap X'\subset\left(\overline A_i\cup \overline A_{i+1}\cup Z_{W_1}\right)\cup\left(\mathrm{lower\ dimensional\ terms}\right).
$$
Therefore the restriction to $X'$ of the closure of $B_1-B_2+B_3-\ldots+B_9$ defines a cycle of the form $dZ_{W_1}+Z_{W_2}+Z_{W_3}$ for some
$d\geq 0$.  Permuting the $W_i$ and adding up, we conclude that $(d+2)(Z_{W_1}+Z_{W_2} +Z_{W_3})$ is the restriction of a cycle of $Gr(3,V_9)$, which concludes the proof.
\end{proof}

In the end of this subsection, we are going to present a lemma about the relation of $I$ and $I_3$.

\begin{lemma}
\label{lemma_I_6}
For a general $X$ the expected dimension of $I$ is 6.
\end{lemma}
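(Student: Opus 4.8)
The plan is to compute the expected dimension by exhibiting $I$ as the image of a concrete incidence variety and counting the equations that cut it out, the whole point being that one of those equations is shared. Away from $\Delta_{F(X)}$ a point of $I$ remembers, besides the pair $([W_1],[W_2])$, its intersection $V_3 = W_1 \cap W_2$, so first I would introduce
$$
\mathcal P = \{(V_3, W_1, W_2) \in Gr(3,V_{10}) \times Gr(6,V_{10}) \times Gr(6,V_{10}) : V_3 \subseteq W_1 \cap W_2\}.
$$
Choosing $V_3 \in Gr(3,V_{10})$ costs $21$ parameters and then each $W_i \supseteq V_3$ varies in $Gr(3,V_{10}/V_3)\cong Gr(3,7)$, costing $12$ more each, so $\dim \mathcal P = 21 + 12 + 12 = 45$. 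For a general triple the two $3$-planes $W_i/V_3$ lie in the $7$-dimensional $V_{10}/V_3$ and hence meet only in $0$, so $W_1 \cap W_2 = V_3$ and the projection $(V_3,W_1,W_2)\mapsto(W_1,W_2)$ is generically injective onto the stratum $\{\dim(W_1\cap W_2)=3\}$; its closure, intersected with $F(X)\times F(X)$, is $I$.

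Next I would cut $I$ out inside $\mathcal P$ by the two conditions $[W_1],[W_2]\in F(X)$, that is the vanishing of the sections $i^*_{W_1}\alpha_X\in\Lambda^3 W_1^*$ and $i^*_{W_2}\alpha_X\in\Lambda^3 W_2^*$. Since $\dim\Lambda^3 W_i^* = \binom{6}{3} = 20$, this is naively $20+20=40$ scalar conditions. The crux of the lemma is that the two systems overlap in exactly one equation: restricting either $i^*_{W_i}\alpha_X$ to the common subspace $V_3\subseteq W_i$ produces the single scalar $i^*_{V_3}\alpha_X\in\Lambda^3 V_3^*\cong\C$, so $i^*_{V_3}\alpha_X=0$ is forced by each of the two vanishings. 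The remaining $19$ equations of each system involve the complementary directions $W_1/V_3$ and $W_2/V_3$, which are transverse for a general triple, so no further overlap is forced. Hence the combined system imposes $20+20-1=39$ conditions and the expected dimension is $\dim\mathcal P - 39 = 45 - 39 = 6$; for general $\alpha_X$ the two sections should be as transverse as this single forced relation allows, so the count is attained.

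The main thing to get right, and the source of the lemma's content, is this single shared equation: it is exactly what turns the answer into $6$ rather than $5$. Indeed the Schubert locus $\Sigma=\{\dim(W_1\cap W_2)\geq 3\}\subset Gr(6,V_{10})\times Gr(6,V_{10})$ has codimension $3$ (its preimage in $\mathcal P$ has dimension $45$ while $Gr(6,V_{10})^2$ has dimension $48$), so a blunt transversality count would predict $\dim\big((F(X)\times F(X))\cap\Sigma\big)=8-3=5$, and the shared condition accounts for the excess $+1$. I would also be careful to isolate the off-diagonal locus: the full preimage $(q,q)^{-1}\Delta_X$ contains a component over $\Delta_{F(X)}$ of dimension $\dim F(X)+\dim Gr(3,6)=4+9=13$, which is why $I$ is defined as the closure of the part away from the diagonal, and why $\mathcal P$ above is arranged to see only the locus where $W_1\cap W_2$ is genuinely $3$-dimensional. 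The step I expect to require the most care is upgrading the \emph{expected} count to the \emph{actual} dimension: showing that for a general $X$ the two sections meet transversally, so that $\dim I$ equals the predicted $6$, which is the role of the genericity hypothesis in the statement.
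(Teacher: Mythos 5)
Your count is correct, and it computes the expected dimension of exactly the same variety as the paper, only with a different bookkeeping. The paper's proof is three lines: it works with the universal family $U$ of diagram (\ref{diag_new_fano}), in which the two conditions $i_{W_1}^*\alpha_X=0$ and $i_{W_2}^*\alpha_X=0$ are already absorbed (giving $\dim U\times U=26$), and then pulls back the diagonal: $(q,q)^{-1}\Delta_X$ has expected codimension $\codim \Delta_X=20$, whence $26-20=6$. Your locus inside $\mathcal P$ is precisely this same variety $U\times_X U=(q,q)^{-1}\Delta_X$, built instead inside $Gr(3,V_{10})\times Gr(6,V_{10})\times Gr(6,V_{10})$ with the $\alpha_X$-conditions counted explicitly; the two computations match term by term. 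In particular, your key observation --- the single shared scalar equation $i^*_{V_3}\alpha_X=0$ --- is exactly what is hidden in the paper's count in the fact that the relevant diagonal is $\Delta_X$, of codimension $\dim X=20$, rather than the restriction of $\Delta_{Gr(3,V_{10})}$, of codimension $21$: the ``$-1$'' sits in the codimension of the diagonal there, and in the overlap of your two linear systems here. What your route buys: it makes explicit the excess of the intersection of $F(X)\times F(X)$ with the Schubert locus $\{\dim (W_1\cap W_2)\geq 3\}$ (dimension $6$ rather than the blunt prediction $8-3=5$), it exhibits the $13$-dimensional diagonal component which is the reason $I$ must be defined as a closure away from $\Delta_{F(X)}$, and it isolates the transversality statement one would need to upgrade ``expected'' to ``actual''. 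What the paper's route buys is brevity: since the lemma claims only the expected dimension, the short pullback count suffices, and the actual dimensions of the strata $I^{(i)}$ are quoted separately as calculations for general $\alpha_X$.
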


\begin{proof}
The codimension of $\Delta_X$ is 20, therefore the codimension of $(q,q)^{-1}\Delta_X$ is also 20. Since $U\times U$ has dimension 26, the dimension of $(q,q)^{-1}\Delta_X$ as well as the dimension of $(p,p)(q,q)^{-1}\Delta_X$ should be 6. 
\end{proof}

\begin{lemma}
\label{l_45}
\label{l_45_2}
For a general choice of $\alpha_X$, the natural projection $\pi_{12}: I^o_3\to I$ has degree one. 
As a consequence, there exists a birational map:
$$
\xymatrix{
\phi=\pi_3\circ \pi_{12}^{-1}:I\ar@{-->}[r] & F(X).
}$$ 
\end{lemma}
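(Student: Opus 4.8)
The plan is to establish the degree-one claim by producing, for a general pair $([W_1],[W_2])\in I$, a \emph{unique} third space $[W_3]\in F(X)$ completing the triangle, and then to deduce the birationality of $\phi=\pi_3\circ\pi_{12}^{-1}$ formally. I would work near a general point of $I$, which by the discussion preceding Lemma \ref{l_48} lies in the open stratum $I^{(3)}$: here $K_3=W_1\cap W_2$ is $3$-dimensional and $W_1=K_2\oplus K_3$, $W_2=K_3\oplus K_1$ with $K_1,K_2,K_3$ pairwise transversal inside a $9$-dimensional space $V_9=K_1\oplus K_2\oplus K_3$. The candidate completion is forced: a triangle requires $W_3$ to meet both $W_1$ and $W_2$ in $3$-dimensional subspaces, and for the triangle to be nondegenerate (i.e.\ to lie in $I^o_3$) we must have $W_3\supset K_1$ and $W_3\supset K_2$, whence $W_3=K_1\oplus K_2$. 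So the only possible third vertex is $W_3=K_1\oplus K_2=V_9\ominus K_3$, and degree one will follow once I verify that this $W_3$ indeed lies in $F(X)$, i.e.\ that $i_{W_3}^*\alpha_X=0$.

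The key step is therefore this vanishing. First I would recall from the setup that $\alpha'=i_{V_9}^*\alpha_X$ lies in $K_1^*\otimes K_2^*\otimes K_3^*$, because $\alpha_X$ already vanishes identically on each of $W_1=K_2\oplus K_3$ and $W_2=K_3\oplus K_1$; the three tensor slots are genuinely separate, with no $\Lambda^2 K_i^*$ contributions surviving. Now $W_3=K_1\oplus K_2$ contains no $K_3$-direction, so evaluating $\alpha'$ on any trivector drawn from $W_3$ plugs three vectors into the factors $K_1^*\otimes K_2^*\otimes K_3^*$ with \emph{no} input lying in $K_3$; since every term of $\alpha'$ requires exactly one $K_3^*$-contraction, every such evaluation vanishes. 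Hence $i_{W_3}^*\alpha'=0$, and as $\alpha'$ is the restriction of $\alpha_X$ we get $i_{W_3}^*\alpha_X=0$, i.e.\ $[W_3]\in F(X)$. This shows the fibre of $\pi_{12}$ over a general point of $I$ is the single reduced point $([W_1],[W_2],[W_3])$, so $\deg\pi_{12}=1$.

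Having a bijective morphism of degree one over a dense open, $\pi_{12}\colon I^o_3\to I$ is birational (it is generically injective between varieties of the same dimension $6$, the equality of dimensions coming from Lemma \ref{lemma_I_6} together with $\dim I^o_3=6$). Composing its rational inverse with the third projection gives the stated birational map $\phi=\pi_3\circ\pi_{12}^{-1}\colon I\dashrightarrow F(X)$, which on a general point sends $([W_1],[W_2])$ to $[K_1\oplus K_2]$. I expect the main obstacle to be purely at the bookkeeping level rather than the conceptual one: I must confirm that a general point of $I$ really lands in the transversal stratum $I^{(3)}$ so that the decomposition $V_9=K_1\oplus K_2\oplus K_3$ is available, and that no \emph{degenerate} completion $W_3$ (one meeting $W_1$ or $W_2$ in dimension $>3$, i.e.\ a point of $I_3\setminus I^o_3$) can sneak in to inflate the generic fibre; the genericity of $\alpha_X$, via the transversality already exploited in Lemma \ref{l_48}, is what rules this out.
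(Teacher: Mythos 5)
Your argument has a genuine gap, located exactly at what you call the ``key step.'' When only the pair $([W_1],[W_2])\in I$ is given, the spaces $K_1\subset W_2$ and $K_2\subset W_1$ are \emph{arbitrary} complements of $K_3=W_1\cap W_2$, and the vanishing of $\alpha_X$ on $W_1=K_2\oplus K_3$ and on $W_2=K_1\oplus K_3$ does \emph{not} force $\alpha'=i_{V_9}^*\alpha_X$ to lie in $K_1^*\otimes K_2^*\otimes K_3^*$. Decomposing $\Lambda^3V_9^*=\bigoplus_{a+b+c=3}\Lambda^aK_1^*\otimes\Lambda^bK_2^*\otimes\Lambda^cK_3^*$, the hypothesis $i_{W_1}^*\alpha'=0$ kills the summands with $a=0$, and $i_{W_2}^*\alpha'=0$ kills those with $b=0$; but the summands $\Lambda^2K_1^*\otimes K_2^*$ and $K_1^*\otimes\Lambda^2K_2^*$ survive, and they are precisely $i_{K_1\oplus K_2}^*\alpha'$. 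The clean decomposition $\alpha'\in K_1^*\otimes K_2^*\otimes K_3^*$ invoked before Lemma \ref{l_48} is available only because there the $K_i$ are cut out by an \emph{already existing} triangle ($K_1=W_2\cap W_3$, etc.), so that $\alpha'$ vanishes on all three $W_i$; assuming it in the present situation is assuming exactly what you want to prove. Your uniqueness claim fails for the same reason: membership in $I_3^o$ only forces $W_3\cap W_2$ and $W_3\cap W_1$ to be $3$-dimensional complements of $K_3$ inside $W_2$ and $W_1$, i.e.\ graphs of linear maps $K_1\to K_3$ and $K_2\to K_3$, not the particular complements $K_1,K_2$ you fixed beforehand. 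Indeed your argument is self-defeating: if vanishing on $W_1$ and $W_2$ really implied $\alpha'\in K_1^*\otimes K_2^*\otimes K_3^*$ for \emph{every} choice of complements, then every member of the $18$-parameter family of spaces $K_1\oplus K_2$ would lie in $F(X)$ and complete the triangle, so the fiber $\pi_{12}^{-1}(p)$ would be $18$-dimensional rather than a single point.

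The paper's proof handles this correctly: every candidate $W_3$ is transverse to $K_3$, hence is the graph of a map $\phi\oplus\psi\in \mathrm{Hom}(K_1\oplus K_2, K_3)$, and the condition $i^*_{W_3}\alpha'=0$ becomes an \emph{inhomogeneous} linear system whose right-hand sides are exactly the surviving components $-\alpha'(v_1\wedge v_2\wedge w_1)$ and $-\alpha'(v_1\wedge w_1\wedge w_2)$, and whose coefficient matrix is the pairing of Lemma \ref{l_48}, non-degenerate for general $\alpha_X$. This yields a unique solution $(\phi,\psi)$, generally nonzero, so the unique third vertex is generally \emph{not} $K_1\oplus K_2$; degree one follows. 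To salvage your approach you would have to prove the vanishing $i^*_{K_1\oplus K_2}\alpha'=0$ for a suitable (not arbitrary) choice of complements, and finding that choice amounts to solving the very same linear system.
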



\begin{proof}
We are going to understand the fiber $\pi_{12}^{-1}(p)$ for a general point $p=([W_1],[W_2])\in I$. Let $K_3 = W_1\cap W_2$, $K_2\subset W_1$ such that $K_2\cap K_3 = 0$, and $K_1\subset W_2$ such that $K_1\cap K_2=0$, $V_9=K_1\oplus K_2\oplus K_3$.  

Any point of $I^o_3\cap \pi_{12}^{-1}(p)$ belongs to the following open char of $Gr(6,V_9)$:
$$
\{V : \dim\pi_{K_1\oplus K_2}(V)=6\} \subset Gr(6,V_9).
$$
This chart can be identify with $\mathrm{Hom}\, (K_1\oplus K_2, K_3)$, and its
point $(\phi\oplus\psi)$ belongs to $I_3^o$, if and only if the following equations hold:
$$
\alpha' ( (v_1+\phi(v_1) )\wedge(v_2+\phi(v_2))\wedge (w_1+\psi(w_1)) ) = 0,
$$
$$
\alpha' ( (v_1+\phi(v_1) )\wedge(w_1+\psi(w_1))\wedge (w_2+\psi(w_2)) ) = 0
$$
for any $v_1,v_2\in K_1$ and any $w_1,w_2\in K_2$. Since, $i^*_{W_1}\alpha'=i^*_{W_2}\alpha'=0$, these equations are equivalent to
$$
\alpha'( v_1\wedge \phi(v_2)\wedge w_1) + \alpha'(\phi(v_1) \wedge v_2\wedge w_1)=-\alpha'( v_1\wedge v_2\wedge w_1).
$$
$$
\alpha'( v_1\wedge \psi(w_1)\wedge w_2) + \alpha'(v_1 \wedge w_1\wedge \psi(w_2))=-\alpha'( v_1\wedge w_1\wedge w_2).
$$
The equations define two linear systems: for $\phi$ and for $\psi$. Since the coefficients in the left hand sides of equations are defined only by the part of $\alpha'$ which belongs to $K_1^*\otimes K_2^*\otimes K_3^*$, we may apply Lemma \ref{l_48} to see that the matrix of coefficients in the linear system for $\phi\in\mathrm{Hom}\, (K_1,K_3)$ is a non-degenerate $9\times9-$matrix. Similarly, the matrix for $\psi$ is non-degenerate. Therefore there is a unique solution and the natural projection $\pi_{12}: I^o_3\to I$ has degree one.
\end{proof}

\subsection{Technical lemmas: boundary}

The goal of this subsection is to prove the following lemma. We continue with the notation from the previous subsection.

\begin{lemma}
\label{l_47}
For a general choice of basis $(v_j)$ of $K_2^3$ and for any choice of integer $i$ between $1$ and $9$, let $B\subset O$ be the $10$-dimensional vector space
$$
\left<v_i,v_{i+1},\ldots,v_9,v^*_1,\ldots,v^*_{i} \right>,
$$
where $v_j^*$ denotes the dual basis of $K_3^*$ with respect to $\alpha'|_{O}$. Then the intersection $\overline B\cap D_k\cap X'$ has dimension at most $8$ for $k=1,2$.
\end{lemma}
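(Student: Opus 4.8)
My plan is to describe the boundary $\overline B\cap D_k$ as a family of degenerate limits of graphs and then to show that the hyperplane section $X'$ cuts down its top-dimensional part. Identifying $O$ with $\Hom(K_1,W_1)$, $W_1=K_2\oplus K_3$, a point of $O$ is the graph $V=\{x+f(x):x\in K_1\}$ of some $f$, and $B$ is a $10$-dimensional linear subspace. Along a line $f_0+tg$ in $B$ (with $g\neq 0$) one computes
$$
\lim_{t\to\infty}\mathrm{graph}(f_0+tg)\ =\ \mathrm{im}(g)\ \oplus\ \{\,x+f_0(x):x\in\ker g\,\},
$$
a $3$-plane meeting $W_1$ exactly along $\mathrm{im}(g)$, hence lying in $D_{\rk g}$. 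Thus the line-limits produce boundary points in $D_k$ from $g\in B$ of rank $k$ together with the datum $f_0|_{\ker g}$, the latter accounting for the indeterminacy of the rational map $\P(B)\dashrightarrow Gr(3,V_9)$ along the rank-degenerate directions. Since $\overline B$ is irreducible of dimension $10$ and $\overline B\setminus B=\overline B\cap(D_1\cup D_2\cup D_3)$ is a proper closed subset, each $\overline B\cap D_k$ has dimension at most $9$ for free.

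Because $X'$ is the Plücker hyperplane section cut out by $\alpha'\in\Lambda^3V_9^*$, the lemma reduces to a non-containment: it suffices to check that no irreducible component $C$ of $\overline B\cap D_k$ with $\dim C=9$ lies inside $X'$. Then $X'$ meets every such $C$ properly, so $\dim(C\cap X')\le 8$, while the components of $\overline B\cap D_k$ of dimension $\le 8$ obey the bound trivially. Equivalently, I must produce on each $9$-dimensional component one plane $V_\infty$ with $\alpha'(V_\infty)\neq 0$.

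To do this I evaluate $\alpha'(V_\infty)$ using that $\alpha'\in K_1^*\otimes K_2^*\otimes K_3^*$ contracts exactly one vector out of each summand $K_i$. If $B$ contains rank-$1$ directions at all (otherwise $\overline B\cap D_1$ is empty and there is nothing to prove), take one lying in the summand $P\subset\Hom(K_1,K_2)$ of $B=P\oplus R$ (the case of $R$ being symmetric); writing $w_0$ for a generator of $\mathrm{im}(g)\subset K_2$ and $x_1,x_2$ for a basis of $\ker g$, the $K_2$-slot of $\alpha'$ is forced to be $w_0$, and one finds
$$
\alpha'(V_\infty)=\alpha'(x_1\wedge w_0\wedge\psi(x_2))-\alpha'(x_2\wedge w_0\wedge\psi(x_1)),
$$
where $\psi\in R$ is the $K_3$-component of $f_0$. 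This is a non-zero linear form in $\psi$ for a general basis, by the non-degeneracy of the pairing from Lemma \ref{l_48}; taking $f_0$ with a general $R$-component gives $V_\infty\notin X'$. The case $k=2$ is entirely analogous: now $\mathrm{im}(g)$ is $2$-dimensional and $\alpha'(V_\infty)$ becomes the corresponding bilinear expression in the $K_2$- and $K_3$-parts of $\mathrm{im}(g)$ and of $f_0(\ker g)$, again non-zero for general data. It is convenient to note that $B=A_i+A_{i+1}$ with $A_i,A_{i+1}$ maximal isotropic, so $\overline{A_i}\cup\overline{A_{i+1}}\subset X'$ already contributes only $\le 8$-dimensional boundary; the content is that the extra boundary, coming from points off the quadric $Q$, is not swallowed by $X'$.

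The main obstacle is to make the boundary analysis complete and uniform. The line-limits above do not a priori exhaust $\overline B\cap D_k$: higher-order degenerations (curves whose leading coefficient is itself rank-deficient) contribute further limit planes, and these must be organised by an explicit resolution of $\overline B$ whose exceptional loci map to the $D_k$. I would check that the exceptional contributions landing in $D_1$ and $D_2$ are accounted for by the rank-$\le 2$ strata above, deeper degenerations falling into $D_3$ or into lower dimension, and then verify the generic non-vanishing of $\alpha'$ on every $9$-dimensional component, for each $i$ and for a general basis $(v_j)$. Equivalently, viewing $\overline B\cap D_k$ as moving with the basis while $X'$ stays fixed, I would show this family is not trapped in $X'$; the genericity of $(v_j)$ enters precisely to force all the relevant determinantal intersections to be proper and all the boundary forms above to be non-zero.
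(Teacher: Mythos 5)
Your reduction of the lemma to a non-containment statement (no $9$-dimensional component of $\overline B\cap D_k$ lies in $X'$), and your description of boundary points as limits of lines $f_0+tg$ with $\rk g=k$, coincide with the paper's starting point, and for $k=1$ your perturbation argument is in the spirit of the paper's Lemma \ref{l_49}. Even there, however, your case analysis is incomplete: a rank-one $g\in B$ need not lie in $P\cup R$. Rank-one elements are automatically isotropic for the quadratic form $\alpha'|_O$ (the pairing terms cancel by antisymmetry), so they sweep out $B\cap X'=A_i\cup A_{i+1}$, and these two $9$-spaces mix the $v_j$ and the $v_j^*$. Moreover, your linear form $\psi\mapsto\alpha'(x_1\wedge w_0\wedge\psi(x_2))-\alpha'(x_2\wedge w_0\wedge\psi(x_1))$ is indeed nonzero on all of $\Hom(K_1,K_3)$ by Lemma \ref{l_48}, but the perturbation $\psi$ must stay inside the $10$-dimensional $B$, i.e.\ inside $R$, and the restriction of this form to $R$ can vanish identically for special rank-one $g$ (for instance $g\in\left<v_{i+1},\ldots,v_9\right>$); the paper circumvents exactly this by noting that a direction in $A_i$ does not involve $v_i^*$, so that $v_i^*$ — hence the perturbation — can be prescribed freely for a general basis $(v_j)$.

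The decisive gap is the case $k=2$, which you declare ``entirely analogous''. It is not, and your claimed formula is wrong. For a rank-two direction $g$, $\ker g$ is a line, spanned by some $x\in K_1$, and the limit plane is $V_\infty=\mathrm{im}(g)\oplus\left<x+f_0(x)\right>$, so with $\mathrm{im}(g)=\left<w_1,w_2\right>$ one has
$$
\alpha'(V_\infty)=\alpha'\bigl(w_1\wedge w_2\wedge x\bigr)+\alpha'\bigl(w_1\wedge w_2\wedge f_0(x)\bigr),
$$
and the second term vanishes identically, because $w_1,w_2,f_0(x)$ all lie in $K_2\oplus K_3$ while $\alpha'\in K_1^*\otimes K_2^*\otimes K_3^*$ annihilates $\Lambda^3(K_2\oplus K_3)$. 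Thus $\alpha'(V_\infty)$ does not depend on $f_0$: there is no ``bilinear expression in $f_0(\ker g)$'' to make nonzero, and no perturbation of $f_0$ can move a boundary point off $X'$. This is precisely the paper's observation that every fiber of the fibration on $D_2$ is either entirely contained in $X'$ or disjoint from it, and it is why the paper replaces the perturbation argument by a dimension count: projecting along the $f_0$-coordinates by $\pi$, Lemma \ref{l_410} bounds $\dim\pi(\overline A\cap D_2')\le 4$ for each of the two isotropic $9$-spaces $A\subset B\cap X'$ (where all relevant directions live, since limits in $X'\cap D_2$ of lines through the origin force the line into $B\cap X'$), whereas a $9$-dimensional component of $\overline B\cap D_2$ contained in $X'$ would have $\pi$-image of dimension at least $5$ — a contradiction (Lemma \ref{l_411}). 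Nothing in your proposal supplies this step; and since you also explicitly defer the completeness of the boundary analysis (``I would check\dots'', ``I would show\dots''), the lemma for $k=2$ is not proved, nor provable along the route you sketch.
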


The proof of Lemma \ref{l_47} will rest on Lemma \ref{l_49} for $k=1$ and Lemma \ref{l_411} for $k=2$. Before the proof we introduce local coordinates on $O$ and relate them to the local coordinates on $D_1$ and on $D_2$. In particular, we show that any point on $D_k$ is the limit point of some affine line in $O$. Finally, we show that the intersection $\overline B\cap D_k\cap X'$ has dimension at most $8$ by proving that any $9-$dimensional component of $\overline B\cap D_k$ is not contained in $X'$.

We recall that a point in $Gr(3,9)$ can be represented by three independent vectors, i.e., $3\times 9$ matrix (both up to $GL(3)-$action). Fixing basis of $K_1, K_2$, and $K_3$, we have in chart $O$ a representation 
\begin{equation}
\label{matrix_V}
p_O=\left(
\begin{matrix}
1&0&0&n_1&n_2&n_3&m_1&m_2&m_3\\
0&1&0&n_4&n_5&n_6&m_4&m_5&m_6\\
0&0&1&n_7&n_8&n_9&m_7&m_8&m_9\\
\end{matrix}
\right).
\end{equation}
In this notation, $n_i$ correspond to an element of $\mathrm{Hom}(K_1, K_2)$ and $m_i$ correspond to an element of $\mathrm{Hom}(K_1,K_3)$. Unfortunately, it is not possible to relate coordinates $n_j$ and $m_j$ with the basis $(v_j,v_j^*)$ in a simple way, because the quadratic form $\alpha'|_O$ is not a general quadratic form on $O$ (its form was explained in Lemma \ref{l_48}).

Now we are going to study the boundaries $D_1$ and $D_2$.  According to the definition of $D_k$, a point $p\in D_k$ can be represented by a $3\times 9-$matrix, whose rank of the first three columns is equal to $3-k$. 

For $k=1$, we need another chart $O'$ where the rank of the first three columns may be two. Without loss of generality, we may assume that the columns number $1, 2$, and $4$ are linearly independent in $O'$. On $O'\cap O$ we have 
\begin{multline*}
\left(
\begin{matrix}
1&0&n'_1&0&n'_2&n'_3&m'_1&m'_2&m'_3\\
0&1&n'_4&0&n'_5&n'_6&m'_4&m'_5&m'_6\\
0&0&n'_7&1&n'_8&n'_9&m'_7&m'_8&m'_9\\
\end{matrix}
\right)=\\
\left(
\begin{matrix}
1&0&-\frac{n_1}{n_7}&0&\frac{n_2n_7-n_1n_8}{n_7}&\frac{n_3n_7-n_1n_9}{n_7}&\frac{m_1n_7-n_1m_7}{n_7}&\frac{m_2n_7-n_1m_8}{n_7}&\frac{m_3n_7-n_1m_8}{n_7}\\
0&1&-\frac{n_4}{n_7}&0&\frac{n_5n_7-n_4n_8}{n_7}&\frac{n_6n_7-n_4n_9}{n_7}&\frac{m_4n_7-n_4m_7}{n_7}&\frac{m_5n_7-n_4m_8}{n_7}&\frac{m_6n_7-n_4m_8}{n_7}\\
0&0&\frac{1}{n_7}&1&\frac{n_8}{n_7}&\frac{n_9}{n_7}&\frac{m_7}{n_7}&\frac{m_8}{n_7}&\frac{m_9}{n_7}\\
\end{matrix}
\right).
\end{multline*}
The intersection $D_1'=O'\cap D_1$ is the $17-$dimensional affine space defined by $n'_7=0$ in $O'$. Any point $p\in D_1'$ can be represented as the $(t\rightarrow\infty)$-limit of the affine line 
$$
l=t\left(0|N_0|M_0\right)+\left(id|N_1|M_1\right)
$$
in $O$, where the $3\times 3-$matrices $N_0, N_1, M_0, M_1$ are defined by the coordinates $n'_i$ and $m'_i$ of $p$. Moreover, since the coordinates $n'_2,n'_3,n'_5,n'_6$ and $m'_1,\ldots,m'_6$ are well-defined as $t\to \infty$, the direction $(N_0|M_0)$ must satisfy the condition $rk(N_0|M_0)=1$. Conversely, a choice of $(N_0|M_0)$ with $rk(N_0|M_0)=1$ and a point $(N_1|M_1)\in O$ defines a line, whose limit point $p$ belongs to $D_1$. The coordinates
$$
n_1', n'_4,n'_7, n'_8,n'_9, m'_7, m'_8, m'_9
$$
of the limit point $p\in D_1'$ are defined only by the choice of $(N_0|M_0)$ and the remaining coordinates 
$$
n'_2, n'_3, n'_5, n'_6, m'_1,\ldots, m'_6
$$
are defined by $(N_1|M_1)$, while $(N_0|M_0)$ is fixed. The projection of $D'_1$ along the coordinates $n'_2, n'_3, n'_5, n'_6, m'_1,\ldots, m'_6$ defines the structure of a fibration on $D_1'$, which can be extended to $D_1$.

\begin{lemma}
\label{l_49}
Let $F$ be the intersection of $\overline B$ with a fiber of $D_1$. Then there exists a point $p\in F$, such that $p\notin X'$.
\end{lemma}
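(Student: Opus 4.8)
The statement asserts that $F=\overline B\cap(\text{fiber of }D_1)$, although entirely contained in the boundary stratum $D_1$, is not swallowed by the hypersurface $X'$; equivalently, it suffices to produce one point $p\in F$ with $\alpha'(p)\neq 0$. The plan is to parametrize $F$ by carrying $B$ to its limit in $D_1$ along the lines $l(t)=t(0\,|\,N_0\,|\,M_0)+(\mathrm{id}\,|\,N_1\,|\,M_1)$ introduced above, and then to evaluate the Pl\"ucker form $\alpha'$ directly on the resulting $3$-planes. First I would fix the fiber by fixing the rank-one direction $(N_0\,|\,M_0)\in B$ at infinity, writing it as $u\,w^{T}$ with $u\in K_1$ (indexed by the $e_1,e_2,e_3$ rows) and recording $w$ by a vector $\omega\in K_2\oplus K_3$. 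For a basepoint $(N_1\,|\,M_1)\in B$ the rows of $l(t)$ are $b_j+t\,u_j\,\omega$ with $b_j=e_j+(N_1)_j+(M_1)_j$, and one step of row reduction (dividing the row with $u_j\neq 0$ by $tu_j$ and clearing the others) shows that the $t\to\infty$ limit $3$-plane is $p=\langle\,\omega,\ \xi_1+\eta_1,\ \xi_2+\eta_2\,\rangle$, where $\xi_1,\xi_2\in K_1$ depend only on $u$ and $\eta_1,\eta_2\in K_2\oplus K_3$ are the finite parts inherited from $(N_1,M_1)$. As $(N_1,M_1)$ ranges over $B$ with $(N_0,M_0)$ fixed, the pair $(\eta_1,\eta_2)$ parametrizes $F$; this is exactly the fibration structure recorded before the statement.

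Next I would compute $\alpha'(p)$. Since $\alpha'\in K_1^*\otimes K_2^*\otimes K_3^*$ only mixed terms survive, and because $\omega$ carries no $K_1$-component, the unique $K_1$-factor must come from $\xi_1$ or $\xi_2$. Expanding $\alpha'(\omega\wedge(\xi_1+\eta_1)\wedge(\xi_2+\eta_2))$ and splitting $\eta_k=(\eta_k)_2+(\eta_k)_3$, $\omega=\omega_2+\omega_3$ into $K_2$- and $K_3$-components, one obtains a sum of the four trilinear terms $\alpha'(\xi_a\wedge(\eta_b)_2\wedge\omega_3)$ and $\alpha'(\xi_a\wedge\omega_2\wedge(\eta_b)_3)$ with $\{a,b\}=\{1,2\}$ and suitable signs, each expressible through the blocks $Q_1,Q_2,Q_3$ of Lemma \ref{l_48} via $\alpha'=e_1^*\wedge Q_1+e_2^*\wedge Q_2+e_3^*\wedge Q_3$. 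Thus $\alpha'|_F$ is an explicit affine-linear function of the fiber coordinates $(\eta_1,\eta_2)$.

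Finally I would show this function is not identically zero on $F$. Its linear coefficients are the functionals $\eta\mapsto\alpha'(\xi_a\wedge\eta_2\wedge\omega_3)$ and $\eta\mapsto\alpha'(\xi_a\wedge\omega_2\wedge\eta_3)$ on $K_2\oplus K_3$; these vanish simultaneously on the whole range of $(\eta_1,\eta_2)$ only if $\alpha'$ degenerates in a precise direction determined by $(\xi_a,\omega)$. For a general basis $(v_j)$ of $K_2^3$ the induced direction data $(u,\omega)$ is general, and the non-degeneracy of the pairing $\alpha'|_O$ from Lemma \ref{l_48} excludes this simultaneous vanishing. Concretely, I would substitute a convenient $(\eta_1,\eta_2)$ coming from an explicit $(N_1,M_1)\in B$ and read off $\alpha'(p)\neq 0$ from the $Q_i$, in the same spirit as the explicit choice of $Q_1,Q_2,Q_3$ used to verify non-degeneracy in Lemma \ref{l_48}. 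This yields the desired $p\in F$ with $p\notin X'$.

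I expect the main obstacle to be precisely the transport of $B$ into the boundary: $B$ is defined through the basis $(v_j,v_j^*)$ adapted to the pairing $\alpha'|_O$ and to the rulings $A_i$, whereas the stratum $D_1$ is described in the chart $O'$ whose coordinates $(n',m')$ are adapted to the rank drop rather than to $\alpha'$. As the text warns, these two coordinate systems are not simply related, so the non-vanishing cannot be read off by inspection and must be extracted from the trilinear expression above. The real content is verifying that the generic direction $(u,\omega)$ arising from a general $(v_j)$ is not $\alpha'$-degenerate, which is exactly where the non-degeneracy conclusion of Lemma \ref{l_48} and the genericity hypothesis on $(v_j)$ enter.
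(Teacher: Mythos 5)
Your setup is sound and matches the paper's: you fix the rank-one direction $(N_0|M_0)=u\,\omega^T\in B$, pass to the limit along lines $t(0|N_0|M_0)+(\mathrm{id}|N_1|M_1)$, obtain the limit plane $\langle \omega,\ \xi_1+\eta_1,\ \xi_2+\eta_2\rangle$ with $\xi_a\in K_1$, and observe that $\alpha'$ restricted to the fiber is linear in $(\eta_1,\eta_2)$ with coefficients given by trilinear expressions in $\xi_a$, $\omega_2$, $\omega_3$. Up to this point you are reconstructing exactly the paper's computation. The gap is in your final non-vanishing step, and it is twofold. First, a quantifier problem: the lemma is invoked (in the proof of Lemma \ref{l_47}) for \emph{every} fiber of $D_1$ meeting $\overline B$, i.e.\ for every rank-one element of $B$, which is a positive-dimensional family. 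A general choice of the basis $(v_j)$ is made once, before the direction is chosen, so it cannot render all of these directions ``general''; your sentence ``for a general basis the induced direction data $(u,\omega)$ is general'' does not make sense as a statement holding for all fibers. Second, the implication you rely on is false: non-degeneracy of the $9\times 9$ pairing of Lemma \ref{l_48} does \emph{not} exclude pairs $(\xi,\omega)$ with $\alpha'(\xi\wedge\omega_2\wedge z)=0$ for all $z\in K_3$ (and similarly with $K_2$, $K_3$ exchanged). Indeed, the locus of such ``bad'' pairs is the intersection inside $\P(K_1\otimes K_2)\cong\P^8$ of the four-dimensional Segre variety with the at least five-dimensional projectivized kernel of the map $K_1\otimes K_2\to K_3^*$ induced by $\alpha'$; this intersection is nonempty (of dimension $\geq 1$) for \emph{any} $\alpha'$. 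So the real question is not whether bad pairs exist — they always do — but whether they can arise from rank-one directions lying in $B$, and answering that requires using the specific relation between $B$ and $\alpha'$, which your argument never engages.

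This is precisely where the paper's proof supplies a mechanism that your proposal is missing. Since $(N_0|M_0)$ has rank one, the point of $O$ it defines lies on the quadric $X'|_O$ automatically (any term of $\alpha'$ would need two independent factors from $\omega$), hence $(N_0|M_0)\in B\cap X'=A_i\cup A_{i+1}$; say it lies in $A_i=\langle v_i,\ldots,v_9,v_1^*,\ldots,v_{i-1}^*\rangle$. Then the direction involves no $v_i^*$-component, so the basepoint $(N_1|M_1)$ can be moved along $v_i^*$ (with $N_1=0$) while the line stays in $B$; this produces an explicit family of points of $F$ on which the coordinates $m'_1,\ldots,m'_6$ vary, and $\alpha'$ restricted to this family is linear with coefficients $Q'_j\bigl((0,1,n'_4),(1,n'_8,n'_9)\bigr)$, where $\alpha'=Q'_1\wedge e_7^*+Q'_2\wedge e_8^*+Q'_3\wedge e_9^*$. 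The whole lemma is thereby reduced to the concrete claim that the specific, chart-normalized pair $\bigl((0,1,n'_4),(1,n'_8,n'_9)\bigr)$ is not orthogonal for all three forms $Q'_j$ simultaneously, together with the genericity of $v_i^*$ obtained from the genericity of the basis. Without the identification of the free direction $v_i^*$ (which exists only because the direction at infinity lies in $A_i\cup A_{i+1}$) you have no control over which $(\eta_1,\eta_2)$ actually occur in $F$, and your promised ``convenient explicit $(N_1,M_1)$'' cannot be produced. To repair your proof you would need to add this structural step, or replace it by an honest argument that for a general basis no rank-one element of $B$ yields a bad pair.
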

\begin{proof} 
A choice of fiber is equivalent to the choice of $(N_0|M_0)$ such that $rk(N_0|M_0)=1$. We note that $\alpha'(id|N_0|M_0)=0$, therefore $(N_0|M_0)$ belongs to $B\cap X'$, which is the union of two subspaces 
$$
\left<v_i,v_{i+1},\ldots, v_9,v_1^*,\ldots v_{i-1}^*\right>\ \mathrm{and}\ \left<v_{i+1},\ldots, v_9,v_1^*,\ldots v_{i}^*\right>.
$$
Without loss of generality, we may assume that $(N_0|M_0)$ belongs to the first subspace. In this case $v_i^*$ is not related with $(N_0|M_0)$ and can be chosen arbitrarily (more precisely, there is a choice of basis $(v_j)$ providing the given choice of $v_i^*$). It allows to take a point $(N_1|M_1)$ in $B$ corresponding to $v_i^*$. In this case $N_1=0$ and $M_1$ is arbitrary, which makes the coordinates $m'_1,\ldots,m_6'$ arbitrary (while $n_1', n'_4,n'_7, n'_8,n'_9, m'_7, m'_8, m'_9$ are fixed by the choice of $(N_0|M_0)$). 
Representing $\alpha'$ as $Q'_1\wedge e_7^*+Q'_2\wedge e_8^*+Q'_3\wedge e_9^*$, where $Q_i'\in K_1^*\otimes K_2^*$, we see that
\begin{multline*}
\alpha'(p)=m_1 Q'_1((0,1,n'_4), (1,n'_8,n'_9))+m_2 Q'_2((0,1,n'_4), (1,n'_8,n'_9))+\\
m_3 Q'_3((0,1,n'_4), (1,n'_8,n'_9))+\mathrm {other}\ \mathrm{terms}. 
\end{multline*}
Since $(0,1,n'_4)$ can not be orthogonal to $(1,n'_8,n'_9)$ with respect to all three forms $Q_i'$, at least one of the first three terms is not zero. Therefore a general choice of $m'_1,\ldots,m_6'$ provides a point $p$ with $\alpha(p)\neq0$ and thus $p\not\in X'$.
\end{proof}

For $k=2$, we need a chart $O'$, where the rank of the first three columns may be one. Without loss of generality, we may assume that the columns number $1,4$, and $5$ are linearly independent in $O'$. On $O'$ we have coordinates
$$
\left(
\begin{matrix}
1&n'_1&n'_2&0&0&n'_3&m'_1&m'_2&m'_3\\
0&n'_4&n'_5&1&0&n'_6&m'_4&m'_5&m'_6\\
0&n'_7&n'_8&0&1&n'_9&m'_7&m'_8&m'_9\\
\end{matrix}
\right).
$$
The intersection $D_2'=D_2\cap O'$ is the $14-$dimensional affine subspace in $O'$, defined by $n'_4=n'_5=n'_7=n'_8=0$. Again, any point on $D_2'$ can be represented as the limit of the affine line 
$$
l=t\left(0|N_0|M_0\right)+\left(id|N_1|M_1\right)
$$
in $O$ and, similarly to the case $k=1$, we have $rk(N_0|M_0)=2$. Conversely, an affine line $t\left(0|N_0|M_0\right)+\left(id|N_1|M_1\right)$ with $rk(N_0|M_0)=2$ defines a limit point $p\in D_2$. We also have the structure of a fibration on $D_2$: we can fix $(N_0|M_0)$ and then vary $(N_1|M_1)$. A fiber has the following matrix presentation in $O'$:
$$
\left(
\begin{matrix}
1&n'_1&n'_2&0&0&*&*&*&*\\
0&0&0&1&0&n'_6&m'_4&m'_5&m'_6\\
0&0&0&0&1&n'_9&m'_7&m'_8&m'_9\\
\end{matrix}
\right),
$$
where $n_j'$ and $m_j'$ are fixed. We note that such a fiber of $D_2$ is entirely contained in $X'$ or has an empty intersection with $X'$. It makes sense to consider a projection
$$
\pi: O'\to P,
$$
along the coordinates $n_3', m'_1, m'_2, m'_3$, where $P\subset O'$ is the linear subspace defined by $n_3'=m'_1=m'_2=m'_3=0$. Then, any point $p(t)$ on the line 
$$
t\left(0|N_0|M_0\right)+\left(id|0|0\right),
$$
passing through the origin of $O$ with $rk(N_0|M_0)=2$, can be represented as
$$
\left(
\begin{matrix}
1&n_1'&n_2'&0&0&0&0&0&0\\
0&1&0&tn_4&tn_5&tn_6&tm_4&tm_5&tm_6\\
0&0&1&tn_7&tn_8&tn_9&tm_7&tm_8&tm_9\\
\end{matrix}
\right)
$$
or, for $t\neq 0$,
$$
\left(
\begin{matrix}
1&n_1'&n_2'&0&0&0&0&0&0\\
0&1/t&0&n_4&n_5&n_6&m_4&m_5&m_6\\
0&0&1/t&n_7&n_8&n_9&m_7&m_8&m_9\\
\end{matrix}
\right).
$$
In this notation the value $\alpha'(p(t))$ does not depend on $t$. Therefore the limit point of such a line belongs to $X'\cap D_2$ if and only if the line is contained in $X'$.

We are going to show that the intersection $\overline B\cap D_2$ is not contained in $X'$. We need the following auxiliary lemma.

\begin{lemma}
\label{l_410}
Let $1\leq i\leq 9$ and let
$$
A=\left<v_i,\ldots,v_9,v_1^*,\ldots,v_{i-1}^*\right>.
$$
Then $\pi(\overline A\cap D'_2)$ has dimension at most four.
\end{lemma}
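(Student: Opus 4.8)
The plan is to read off $\overline{A}\cap D_2'$ from the limit-of-lines description established just before the lemma, and then to bound the image under $\pi$ by fibering over $\P(K_1)$. First I would use the normal form set up above: every point of $\overline{A}\cap D_2'$ is the limit as $t\to\infty$ of a line $t(0|N_0|M_0)+(id|N_1|M_1)$ with $\mathrm{rk}(N_0|M_0)=2$. Identifying $O$ with the space of graphs of homomorphisms $K_1\to K_2\oplus K_3$ and $A$ with the subspace $\langle v_i,\dots,v_9\rangle\oplus\langle v_1^*,\dots,v_{i-1}^*\rangle\subseteq \mathrm{Hom}(K_1,K_2)\oplus \mathrm{Hom}(K_1,K_3)$, the matrices $(N_0|M_0)$ and $(N_1|M_1)$ correspond to two elements $g,h\in A$, and the limit plane is $\langle\, \mathrm{im}\, g,\ x_3+h(x_3)\,\rangle$, where $\langle x_3\rangle=\ker g$ and $\mathrm{rk}\, g=2$. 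Thus the boundary is swept out by the pairs $(g,h)$ subject to these rank conditions.

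The next step is to observe that $\pi$ forgets $h$. In the chart $O'$ with pivots in columns $1,4,5$, the four coordinates $n_3',m_1',m_2',m_3'$ killed by $\pi$ are precisely the $K_2\oplus K_3$-components of the row representing $x_3+h(x_3)$ (after clearing the $K_2$-pivots against the two rows that span $\mathrm{im}\, g$); the component $n_1',n_2'$ records the line $\ker g$, and the remaining coordinates record the plane $\mathrm{im}\, g$. Hence $\pi$ of the limit point depends only on the pair $(\ker g,\ \mathrm{im}\, g)$, so
$$\pi(\overline{A}\cap D_2')\subseteq \mathrm{im}\,\Phi,\qquad \Phi\colon \{g\in A:\mathrm{rk}\, g=2\}\longrightarrow \P(K_1)\times Gr(2,K_2\oplus K_3),\quad g\longmapsto(\ker g,\ \mathrm{im}\, g).$$
It therefore suffices to prove $\dim\,\mathrm{im}\,\Phi\le 4$.

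For the dimension bound I would compose $\Phi$ with the projection to $\P(K_1)=\P^2$, whose image has dimension at most $2$. For a general line $\ell=\langle x_3\rangle$ the fibre is $\{\mathrm{im}\, g : g\in W_\ell,\ \mathrm{rk}\, g=2\}$, where $W_\ell=\{g\in A: g(x_3)=0\}$. For general $\ell$ the evaluation $A\to K_2\oplus K_3$, $g\mapsto g(x_3)$, is surjective, so $\dim W_\ell=9-6=3$, and the $2$-planes $\mathrm{im}\, g$ then sweep a family of dimension at most $\dim \P(W_\ell)=2$. Consequently every component of $\mathrm{im}\,\Phi$ dominating $\P(K_1)$ has dimension at most $2+2=4$.

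The hard part will be controlling the components of $\mathrm{im}\,\Phi$ lying over a proper subvariety of $\P(K_1)$, that is, over the special lines $\ell$ for which $\dim W_\ell>3$ (for instance when $\langle v_1^*,\dots,v_{i-1}^*\rangle=0$ and $W_\ell$ is $6$-dimensional). Here I expect the genericity of the basis $(v_j)$ and of $\alpha'$ to be essential: it should guarantee that the evaluation $\ell\mapsto \mathrm{ev}_{x_3}$ has maximal rank off a locus small enough, while on that locus the images $\mathrm{im}\, g$ stay confined to a proper sub-Grassmannian of $Gr(2,K_2\oplus K_3)$ (e.g.\ to $Gr(2,K_2)$), so that the fibres remain of dimension at most $2$ and no component over a lower-dimensional base can exceed dimension $4$. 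Ruling out, say, a curve of special lines each carrying a $3$-dimensional family of image planes is the delicate point; once it is settled, combining it with the generic estimate of the previous paragraph gives $\dim\,\pi(\overline{A}\cap D_2')\le 4$.
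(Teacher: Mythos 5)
Your opening reduction is correct and is in substance the same as the paper's: the containment $\pi(\overline A\cap D_2')\subseteq\overline{\mathrm{im}\,\Phi}$ with $\Phi(g)=(\ker g,\,\mathrm{im}\,g)$ on $\{g\in A:\mathrm{rk}\,g=2\}$ is exactly the paper's assertion that $\dim\pi(\overline A\cap D_2')$ is controlled by the variety of directions $\mathcal M=\{(N_0|M_0)\in A:\mathrm{rk}(N_0|M_0)=2\}$. The problem is with everything after that. First, the premise of your generic estimate is false for most $i$: the evaluation map $A\to K_2\oplus K_3$, $g\mapsto g(x_3)$, splits as the direct sum of the evaluations $\left<v_i,\ldots,v_9\right>\to K_2$ and $\left<v_1^*,\ldots,v_{i-1}^*\right>\to K_3$, since the two summands of $A$ map into $K_2$ and $K_3$ separately. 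It can therefore be surjective only when both $10-i\geq 3$ and $i-1\geq 3$, i.e.\ only for $4\leq i\leq 7$. For $i\in\{1,2,3,8,9\}$ it is non-surjective for \emph{every} $x_3$, not just for special lines, so there are no components to which your dominating-component bound applies, and all of $\mathrm{im}\,\Phi$ falls into the case you leave open. (You notice $i=1$ yourself, but misread it as a degenerate instance of "special $\ell$" rather than as a wholesale failure of the generic estimate.)

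Second, the step you explicitly defer --- "ruling out a curve of special lines each carrying a $3$-dimensional family of image planes" --- is precisely where the paper's proof does its actual work, and no argument is offered for it beyond the hope that genericity suffices; genericity of $(v_j)$ alone cannot do it, because $A$ is never a general $9$-dimensional subspace of $\mathrm{Hom}(K_1,K_2\oplus K_3)$. The paper instead fibers each component $\mathcal M_0$ of $\mathcal M$ not over $\P(K_1)$ but over its projection $\mathcal M_{0N}\subset\left<v_i,\ldots,v_9\right>$, after splitting off the case $M_0\equiv 0$ (where $m_1'=\ldots=m_6'=0$ on the image, giving dimension $\leq 4$ at once), and uses two structural facts absent from your proposal: (i) $\mathcal M_{0N}$ lies in the determinantal cubic $\{\det N_0=0\}$, so $\dim\mathcal M_{0N}\leq 9-i$, and symmetrically $\dim\mathcal M_{0M}\leq i-2$; and (ii) $(v_j^*)$ is the \emph{dual basis} of $(v_j)$ for the non-degenerate pairing $\alpha'|_O$, so the fiber over a general $N_0$, cut out inside $\left<v_1^*,\ldots,v_{i-1}^*\right>$ by the three conditions $M_0(\ker N_0)=0$, is the $\alpha'$-orthogonal of a span $\left<f_1,f_2,f_3,v_i,\ldots,v_9\right>$, shown to be $(13-i)$-dimensional for general data, giving fiber dimension $\leq i-4$. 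In every case this yields $\dim\mathcal M_0\leq 5$, hence the bound $4$ after projectivizing the direction. Without (i) and (ii), or some substitute, the troublesome components (for instance the whole of $\mathrm{im}\,\Phi$ when $i\leq 3$ or $i\geq 8$) are not controlled, so as written your proposal does not prove the lemma.
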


\begin{proof}
The dimension of $\pi(\overline A\cap D'_2)$ depends only on the variety of "directions" in $A$:
$$
\mathcal M=\{(N_0|M_0)\in A: rk(N_0|M_0)=2\}.
$$
Let $\mathcal M_0$ be an irreducible component of $\mathcal M$. There are two possibilities:
\begin{enumerate}
\item $M_0=0$ for general $(N_0|M_0)\in \mathcal M_0$. In this case we have $m'_1=m_2'=\ldots=m'_6=0$ for the corresponding component of $\pi(\overline A\cap D'_2)$. Hence its dimension is at most $4$.

\item $M_0\neq0$ for general $(N_0|M_0)\in \mathcal M_0$. Let $\mathcal M_{0N}$ be the image of the projection of $\mathcal M_0$ to $K_2^3$, i.e., $\mathcal M_{0N}$ is the variety of $N_0$ in pairs $(N_0|M_0)\in \mathcal M_{0}$. We note that $\mathcal M_{0N}$ is contained in the cubic hypersurface defined in $\left<v_i,\ldots, v_9\right>$ by $det(N_0)=0$. In particular, $\dim \mathcal M_{0N}\leq 9-i$. Similarly, we can define $\mathcal M_{0M}$ as the image of the projection of $\mathcal M$ to $K_3^3$. We also have $\dim \mathcal M_{0M}\leq i-2$. There are two possibilities:
\begin{enumerate}
\item $\dim \mathcal M_{0N}=9-i$ or $\dim \mathcal M_{0M}=i-2$. Without loss of generality, we may assume that $\dim \mathcal M_{0N}=9-i$. We are going to show that the fiber of $\mathcal M_0\to \mathcal M_{0N}$ over a general point of $\mathcal M_{0N}$ has dimension at most $i-4$. This will imply that $\dim \mathcal M_0\leq 5$ and therefore $\dim \pi(\overline A\cap D_2)\leq 4$.

Let $N_0$ be a general point of $\mathcal M_{0N}\subset \left<v_i,\ldots, v_9\right>$. The fiber of $\mathcal M_0$ over $N_0$ is the subvariety of  
$$
A\cap K_3^3=\left<v_1^*,\ldots,v_{i}^*\right>
$$
consisting of all matrices $M_0$ such that $rk(N_0|M_0)=2$. It can be defined by $3$ linear equations (depending on $N_0$) and hence it can be represented as the subspace orthogonal with respect to the quadratic form $\alpha'|_O$ to some $3-$dimensional subspace
$$
\left<f_1, f_2, f_3\right>\subset K_2^3=\left<v_1,\ldots,v_{9}\right>,
$$
where $f_1,f_2,f_3$ depend on $N_0$. It can be also seen as the subspace of 
$$
K_3^3=\left<v_1^*,\ldots,v_{9}^*\right>
$$
orthogonal to 
$$
\left<f_1, f_2, f_3, v_i, v_{i+1},\ldots,v_9\right>.
$$
For a general choice of $\alpha$ and a general choice of $N_0$, the vectors 
$$
N_0, f_1,f_2,f_3
$$
are linearly independent in $K_2^3$. Moreover, we claim that all $13-i$ vectors 
$$
f_1,f_2,f_3, v_i,\ldots, v_9
$$
are linearly independent. Indeed, this condition is an open condition, and it is enough to show the result for some choice of $v_j$. We can fix $N_0$ in $K_3^3$, take $v_i=N_0$, then define $v_j$ for $j\neq i$ in such a way that the desired $13-i$ vectors are linearly independent. It is possible, while the dimension of $\left<f_1,f_2,f_3, v_i,\ldots, v_9\right>$ is less than $9$. The assumption of the existence of non-zero $M_0$ in the fiber, provides that $\dim\left<f_1,f_2,f_3, v_i,\ldots, v_9\right>\leq8$.

The independence of $f_1,f_2,f_3, v_i,\ldots, v_9$ will imply that the dimension of a fiber over $N_0$ is at most $i-4$. Hence $\dim \mathcal M_0\leq 5$ and therefore $\dim \pi(\overline A\cap D'_2)\leq 4$.


\item $\dim \mathcal M_{0N}\leq 8-i$ and $\dim \mathcal M_{0M}\leq i-3$. In this case, $\dim \mathcal M_0\leq 5$ and therefore $\dim \pi(\overline A\cap D'_2)\leq 4$.
\end{enumerate}
\end{enumerate}

\end{proof}

\begin{lemma}
\label{l_411}
No $9-$dimensional component of $\overline B\cap D_2$ is contained in $X'$.
\end{lemma}

\begin{proof}
We prove the lemma by contradiction. Assume that there is a component $B'$ of $\overline B\cap D_2$ contained in $X'$. Hence $\pi(B')$ is contained in $X'$ and therefore each point of $\pi(B')$ is the limit of a line contained in $B\cap X'$. Since $B\cap X'=A\cup A'$, where
$$
A=\left<v_i,\ldots v_9,v_1^*,\ldots, v_{i-1}^{*}\right>, A'=\left<v_{i+1},\ldots v_9,v_1^*,\ldots, v_{i}^{*}\right>.
$$
We have that $\pi(B')$ is contained in $\pi(\overline A\cup \overline A')$. Due to Lemma \ref{l_410}, $\dim\pi(\overline A\cup \overline A')\leq 4$, but $\dim\pi(B')\geq 5$. We have a contradiction, which proves the lemma.
 \end{proof}

\begin{proof}[Proof of Lemma \ref{l_47}]
For $k=1$ the result follows from Lemma \ref{l_49}. The intersection $F$ of $\overline B$ with a fiber of $D_1$ is irreducible and therefore it is contained in one irreducible component of $\overline B\cap D_1$. Conversely, any component contains such a fiber. Since each fiber contains a point which does not belong to $X'$, each irreducible component of $\overline B\cap D_1$ is not contained in $X'$.   

For $k=2$ the result follows from Lemma \ref{l_411}. 
\end{proof}



\bibliographystyle{abbrv}

\end{document}